\date{}
\begin{document}

\centerline{}

\centerline {\Large{\bf Reflexivity of linear $n$-normed space with respect to}}
\centerline{\Large{\bf  $b$-linear functional}}

\newcommand{\mvec}[1]{\mbox{\bfseries\itshape #1}}
\centerline{}
\centerline{\textbf{Prasenjit Ghosh}}
\centerline{Department of Pure Mathematics, University of Calcutta,}
\centerline{35, Ballygunge Circular Road, Kolkata, 700019, West Bengal, India}
\centerline{e-mail: prasenjitpuremath@gmail.com}
\centerline{}
\centerline{\textbf{T. K. Samanta}}
\centerline{Department of Mathematics, Uluberia College,}
\centerline{Uluberia, Howrah, 711315,  West Bengal, India}
\centerline{e-mail: mumpu$_{-}$tapas5@yahoo.co.in}

\newtheorem{Theorem}{\quad Theorem}[section]

\newtheorem{definition}[Theorem]{\quad Definition}

\newtheorem{theorem}[Theorem]{\quad Theorem}

\newtheorem{remark}[Theorem]{\quad Remark}

\newtheorem{corollary}[Theorem]{\quad Corollary}

\newtheorem{note}[Theorem]{\quad Note}

\newtheorem{lemma}[Theorem]{\quad Lemma}

\newtheorem{example}[Theorem]{\quad Example}

\newtheorem{result}[Theorem]{\quad Result}
\newtheorem{conclusion}[Theorem]{\quad Conclusion}

\newtheorem{proposition}[Theorem]{\quad Proposition}

\begin{abstract}
\textbf{\emph{In continuation of the paper \cite{Prasenjit}, we discuss various consequences of Hahn-Banach theorem for bounded b-linear functional in linear n-normed space and describe the notion of reflexivity of linear n-normed space with respect to bounded b-linear functional.\,The concepts of strong convergence and weak convergence of a sequence of vectors with respect to bounded b-linear functionals in linear n-normed space have been introduced and some of their properties are being discussed.}}
\end{abstract}
{\bf Keywords:}  \emph{Hahn-Banach theorem, Reflexivity of normed  linear space, Weak \\ \smallskip\hspace{2cm}and strong convergence, Linear n-normed space, n-Banach space.}\\

{\bf 2010 Mathematics Subject Classification:} 46A22,\;46B07,\;46B25.
\\

\section{Introduction}
 
\smallskip\hspace{.6 cm} The dual space of a normed linear space is the set of all bounded linear functionals on the space.\;In some cases, the dual of the dual space, i.\,e., second dual space of a normed space, under a specific mapping-called the natural embedding, is isometrically isomorphic to the original space.\;Such normed spaces are known as reflexive spaces.\;This concept was introduced by H.\,Hahn in 1927 and called reflexivity by E.\,R Lorch in 1939.\;Hahn recognized the importance of reflexivity in his study of linear equations in normed spaces.\;Weak convergence of sequence of vectors in a normed space is a certain kind of interplay between a normed space and its dual space.\;This concept demonstrates a fundamental principle of functional analysis which in turn states that the investigation of normed spaces is generally linked with that of their dual spaces.\;Weak convergence has various applications in the calculus of variations, general theory of differential equations and in fact, plays an important role in many problems of analysis.\\

The notion of linear\;$2$-normed space was introduced by S.\,Gahler \cite{Gahler}.\;A survey of the theory of linear\;$2$-normed space can be found in \cite{Freese}.\;The concept of $2$-Banach space is briefly discussed in \cite{White}.\;H.\,Gunawan and Mashadi \cite{Mashadi} developed the generalization of a linear\;$2$-normed space for \,$n \,\geq\, 2$.   

In this paper,\;some important consequences of the Hahn-Banach theorem for bounded \,$b$-linear functionals in case of linear\;$n$-normed spaces are discussed.\;We shall introduce the notion of \,$b$-relexivity of linear\;$n$-normed space and see that a closed subspce of a \,$b$-reflexive\;$n$-Banach space is also \,$b$-reflexive.\;Finally,\;$b$-weak convergence and \,$b$-strong convergence of a sequence of vectors in a linear\;$n$-normed space in terms of bounded \,$b$-linear functionals are introduced and characterized.

\section{Preliminaries}

\begin{theorem}\cite{E}\label{th0.01}
Let \,$\left\{\,T_{k}\,\right\}$\, be a sequence of bounded linear operators \,$T_{k} \,:\, Y \,\to\, Z$\, from a Banach space \,$Y$\, into a normed space \,$Z$\, such that \,$\left\{\,\left\|\,T_{k}\,(\,x\,)\,\right\|\,\right\}$\, is bounded for every \,$x \,\in\, Y$.\;Then the sequence of the norms \,$\left\{\,\left\|\,T_{k}\,\right\|\,\right\}$\, is bounded.  
\end{theorem}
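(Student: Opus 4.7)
The statement is the classical Banach--Steinhaus (Uniform Boundedness) theorem, so the natural plan is the standard Baire category argument. The plan is to exploit completeness of the Banach space \,$Y$\, to promote pointwise boundedness of the family \,$\{T_{k}\}$\, to uniform boundedness of the operator norms \,$\|T_{k}\|$.

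First, for each positive integer \,$m$\, I would introduce the set
\[
Y_{m} \,=\, \bigl\{\, x \,\in\, Y \,:\, \|\,T_{k}(x)\,\| \,\leq\, m \text{ for every } k \,\bigr\} \,=\, \bigcap_{k} \bigl\{\, x \,\in\, Y \,:\, \|\,T_{k}(x)\,\| \,\leq\, m \,\bigr\}.
\]
Each \,$T_{k}$\, is continuous, hence each set in the intersection is closed in \,$Y$, and therefore \,$Y_{m}$\, is closed. The hypothesis that \,$\{\|T_{k}(x)\|\}$\, is bounded for every \,$x \in Y$\, says precisely that \,$Y \,=\, \bigcup_{m \geq 1} Y_{m}$.

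Next I would invoke the Baire category theorem: since \,$Y$\, is a Banach space (complete metric space), it cannot be written as a countable union of nowhere dense closed sets. Consequently some \,$Y_{m_{0}}$\, has non-empty interior, so there exist \,$x_{0} \in Y$\, and \,$r > 0$\, with the closed ball \,$\overline{B}(x_{0}, r) \,\subset\, Y_{m_{0}}$.

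Finally, for any \,$x \in Y$\, with \,$\|x\| \leq r$\, I would write \,$x \,=\, (x_{0} + x) \,-\, x_{0}$, where both points on the right lie in \,$Y_{m_{0}}$, and conclude
\[
\|\,T_{k}(x)\,\| \,\leq\, \|\,T_{k}(x_{0} + x)\,\| \,+\, \|\,T_{k}(x_{0})\,\| \,\leq\, 2\,m_{0}
\]
for every \,$k$. A standard rescaling argument then gives \,$\|T_{k}\| \,\leq\, 2 m_{0} / r$\, uniformly in \,$k$, which is the desired conclusion. The only delicate step is the appeal to Baire's theorem, which is where completeness of \,$Y$\, is essential; the rest is elementary manipulation of linearity and the triangle inequality.
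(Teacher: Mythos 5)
Your proof is correct: it is the classical Baire category argument for the Uniform Boundedness Principle, and all the steps (closedness of the sets $Y_{m}$, the covering $Y = \bigcup_{m} Y_{m}$, extraction of a ball inside some $Y_{m_{0}}$, and the translation-plus-rescaling estimate $\|T_{k}\| \leq 2m_{0}/r$) are sound. Note that the paper itself gives no proof of this statement; it is quoted as a known result from Kreyszig's textbook, where essentially this same Baire-category proof appears, so your argument coincides with the intended one.
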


\begin{definition}\cite{Mashadi}
Let \,$X$\, be a linear space over the field \,$ \mathbb{K}$, where \,$ \mathbb{K} $\, is the real or complex numbers field with \,$\text{dim}\,X \,\geq\, n$, where \,$n$\, is a positive integer.\;A real valued function \,$\left \|\,\cdot \,,\, \cdots \,,\, \cdot \,\right \| \,:\, X^{\,n} \,\to\, \mathbb{R}$\, is called an n-norm on \,$X$\, if
\begin{itemize}
\item[(N1)]$\left\|\,x_{\,1} \,,\, x_{\,2} \,,\, \cdots \,,\, x_{\,n}\,\right\| \,=\,0$\, if and only if \,$x_{\,1},\, \cdots,\, x_{\,n}$\, are linearly dependent,
\item[(N2)]$\left\|\,x_{\,1} \,,\, x_{\,2} \,,\, \cdots \,,\, x_{\,n}\,\right\|$\; is invariant under permutations of \,$x_{\,1},\, x_{\,2},\, \cdots,\, x_{\,n}$,
\item[(N3)]$\left\|\,\alpha\,x_{\,1} \,,\, x_{\,2} \,,\, \cdots \,,\, x_{\,n}\,\right\| \,=\, |\,\alpha\,|\, \left\|\,x_{\,1} \,,\, x_{\,2} \,,\, \cdots \,,\, x_{\,n}\,\right\|\; \;\;\forall \;\; \alpha \,\in\, \mathbb{K}$,
\item[(N4)]$\left\|\,x \,+\, y \,,\, x_{\,2} \,,\, \cdots \,,\, x_{\,n}\,\right\| \,\leq\, \left\|\,x \,,\, x_{\,2} \,,\, \cdots \,,\, x_{\,n}\,\right\| \,+\,  \left\|\,y \,,\, x_{\,2} \,,\, \cdots \,,\, x_{\,n}\,\right\|$
\end{itemize}
hold for all \,$x,\, y,\, x_{\,1},\, x_{\,2},\, \cdots,\, x_{\,n} \,\in\, X$.\;The pair \,$\left(\,X \,,\, \left \|\,\cdot \,,\, \cdots \,,\, \cdot \,\right \| \,\right)$\; is then called a linear n-normed space.\;For particular value \,$n \,=\, 2$, the space \,$X$\, is said to be a linear 2-normed space \cite{Gahler}. 
\end{definition}

Throughout this paper, \,$X$\, will denote linear\;$n$-normed space over the field \,$\mathbb{K}$\, (\,$= \,\mathbb{R}\, \;\text{or}\, \;\mathbb{C}$\,) associated with the $n$-norm \,$\|\,\cdot \,,\, \cdots \,,\, \cdot\,\|$.

\begin{definition}\cite{Mashadi}
A sequence \,$\{\,x_{\,k}\,\} \,\subseteq\, X$\, is said to converge to \,$x \,\in\, X$\; if 
\[\lim\limits_{k \to \infty}\,\left\|\,x_{\,k} \,-\, x \,,\, x_{\,2} \,,\, \cdots \,,\, x_{\,n} \,\right\| \,=\, 0\]
for every \,$ x_{\,2},\, \cdots,\, x_{\,n} \,\in\, X$\, and it is called a Cauchy sequence if 
\[\lim\limits_{l \,,\, k \to \infty}\,\left\|\,x_{\,l} \,-\, x_{\,k} \,,\, x_{\,2} \,,\, \cdots \,,\, x_{\,n}\,\right\| \,=\, 0\]
for every \,$ x_{\,2},\, \cdots,\, x_{\,n} \,\in\, X$.\;The space \,$X$\, is said to be complete or n-Banach space if every Cauchy sequence in this space is convergent in \,$X$.\;2-Banach space \cite{White} is a particular case of n-Banach space for \,$n \,=\, 2$. 
\end{definition}

\begin{definition}\cite{Soenjaya}
We define the following open and closed ball in \,$X$: 
\[B_{\,\{\,e_{\,2} \,,\, \cdots \,,\, e_{\,n}\,\}}\,(\,a \,,\, \delta\,) \,=\, \left\{\,x \,\in\, X \,:\, \left\|\,x \,-\, a \,,\, e_{\,2} \,,\, \cdots \,,\, e_{\,n}\,\right\| \,<\, \delta \,\right\}\;\text{and}\]
\[B_{\,\{\,e_{\,2} \,,\, \cdots \,,\, e_{\,n}\,\}}\,[\,a \,,\, \delta\,] \,=\, \left\{\,x \,\in\, X \,:\, \left\|\,x \,-\, a \,,\, e_{\,2} \,,\, \cdots \,,\, e_{\,n}\,\right\| \,\leq\, \delta\,\right\},\hspace{.5cm}\]
where \,$a,\, e_{\,2},\, \cdots,\, e_{\,n} \,\in\, X$\, and \,$\delta$\, be a positive number.
\end{definition}

\begin{definition}\cite{Soenjaya}
A subset \,$G$\, of \,$X$\, is said to be open in \,$X$\, if for all \,$a \,\in\, G $, there exist \,$e_{\,2},\, \cdots,\, e_{\,n} \,\in\, X $\, and \, $\delta \,>\, 0 $\; such that \,$B_{\,\{\,e_{\,2} \,,\, \cdots \,,\, e_{\,n}\,\}}\,(\,a \,,\, \delta\,) \,\subseteq\, G$.
\end{definition}

\begin{definition}\cite{Soenjaya}
Let \,$ A \,\subseteq\, X$.\;Then the closure of \,$A$\, is defined as 
\[\overline{A} \,=\, \left\{\, x \,\in\, X \;|\; \,\exists\, \;\{\,x_{\,k}\,\} \,\in\, A \;\;\textit{with}\;  \lim\limits_{k \,\to\, \infty} x_{\,k} \,=\, x \,\right\}.\]
The set \,$ A $\, is said to be closed if $ A \,=\, \overline{A}$. 
\end{definition}

\begin{definition}\cite{Prasenjit}
Let \,$W$\, be a subspace of \,$X$\, and \,$b_{\,2},\, b_{\,3},\, \cdots,\, b_{\,n}$\, be fixed elements in \,$X$\, and \,$\left<\,b_{\,i}\,\right>$\, denote the subspaces of \,$X$\, generated by \,$b_{\,i}$, for \,$i \,=\, 2,\, 3,\, \cdots,\,n $.\;Then a map \,$T \,:\, W \,\times\,\left<\,b_{\,2}\,\right> \,\times\, \cdots \,\times\, \left<\,b_{\,n}\,\right> \,\to\, \mathbb{K}$\; is called a b-linear functional on \,$W \,\times\, \left<\,b_{\,2}\,\right> \,\times\, \cdots \,\times\, \left<\,b_{\,n}\,\right>$, if for every \,$x,\, y \,\in\, W$\, and \,$k \,\in\, \mathbb{K}$, the following conditions hold:
\begin{itemize}
\item[(I)] $T\,(\,x \,+\, y \,,\, b_{\,2}  \,,\, \cdots \,,\, b_{\,n}\,) \,=\, T\,(\,x  \,,\, b_{\,2} \,,\, \cdots \,,\, b_{\,n}\,) \,+\, T\,(\,y  \,,\, b_{\,2} \,,\, \cdots \,,\, b_{\,n}\,)$
\item[(II)] $T\,(\,k\,x  \,,\, b_{\,2} \,,\, \cdots \,,\, b_{\,n}\,) \,=\, k\; T\,(\,x  \,,\, b_{\,2} \,,\, \cdots \,,\, b_{\,n}\,)$. 
\end{itemize}
A b-linear functional is said to be bounded if \,$\exists$\, a real number \,$M \,>\, 0$\, such that
\[\left|\,T\,(\,x  \,,\, b_{\,2} \,,\, \cdots \,,\, b_{\,n}\,)\,\right| \,\leq\, M\; \left\|\,x  \,,\, b_{\,2} \,,\, \cdots \,,\, b_{\,n}\,\right\|\; \;\forall\; x \,\in\, W.\]
The norm of the bounded b-linear functional \,$T$\, is defined by
\[\|\,T\,\| \,=\, \inf\,\left\{\,M \,>\, 0 \,:\, \left|\,T\,(\,x  \,,\, b_{\,2} \,,\, \cdots \,,\, b_{\,n}\,)\,\right| \,\leq\, M\; \left\|\,x  \,,\, b_{\,2} \,,\, \cdots \,,\, b_{\,n}\,\right\| \;\forall\; x \,\in\, W\,\right\}.\]
The norm of \,$T$\, can be expressed by any one of the following equivalent formula:
\begin{itemize}
\item[(I)]\hspace{.2cm}$\|\,T\,\| \,=\, \sup\,\left\{\,\left|\,T\,(\,x \,,\, b_{\,2} \,,\, \cdots \,,\, b_{\,n}\,)\,\right| \;:\; \left\|\,x  \,,\, b_{\,2} \,,\, \cdots \,,\, b_{\,n}\,\right\| \,\leq\, 1\,\right\}$.
\item[(II)]\hspace{.2cm}$\|\,T\,\| \,=\, \sup\,\left\{\,\left|\,T\,(\,x \,,\, b_{\,2} \,,\, \cdots \,,\, b_{\,n}\,)\,\right| \;:\; \left\|\,x  \,,\, b_{\,2} \,,\, \cdots \,,\, b_{\,n}\,\right\| \,=\, 1\,\right\}$.
\item[(III)]\hspace{.2cm}$ \|\,T\,\| \,=\, \sup\,\left \{\,\dfrac{\left|\,T\,(\,x \,,\, b_{\,2} \,,\, \cdots \,,\, b_{\,n}\,)\,\right|}{\left\|\,x \,,\, b_{\,2} \,,\, \cdots \,,\, b_{\,n}\,\right\|} \;:\; \left\|\,x  \,,\, b_{\,2} \,,\, \cdots \,,\, b_{\,n}\,\right\| \,\neq\, 0\,\right \}$. 
\end{itemize}
Also, we have \,$\left|\,T\,(\,x \,,\, b_{\,2} \,,\, \cdots \,,\, b_{\,n}\,)\,\right| \,\leq\, \|\,T\,\|\, \left\|\,x  \,,\, b_{\,2} \,,\, \cdots \,,\, b_{\,n}\,\right\|\, \;\forall\; x \,\in\, W$.
\end{definition}
Let \,$X_{F}^{\,\ast}$\, denote the Banach space of all bounded b-linear functional defined on \,$X \,\times\, \left<\,b_{\,2}\,\right> \,\times \cdots \,\times\, \left<\,b_{\,n}\,\right>$\, with respect to the above norm.

\begin{definition}\cite{Prasenjit}
A set \,$\mathcal{A}$\, of bounded b-linear functionals defined on  \,$X \,\times\, \left<\,b_{\,2}\,\right> \,\times \cdots \,\times\, \left<\,b_{\,n}\,\right>$\, is said to be pointwise bounded if for each \,$x \,\in\, X $, the set \,$\left\{\,T\,(\,x \,,\, b_{\,2} \,,\, \cdots \,,\, b_{\,n}\,) \,:\, T \,\in\, \;\mathcal{A} \,\right\}$\; is a bounded set in \;$\mathbb{K}$\, and  uniformly bounded if, \,$\exists\, \,K \,>\, 0$ such that \,$\|\,T\,\| \,\leq\, K\;\; \;\forall\; T \,\in\, \mathcal{A}$.
\end{definition}

\begin{theorem}\cite{Prasenjit}\label{th1}
Let \,$X$\, be a n-Banach space over the field \,$\mathbb{K}$.\;If a set \,$\mathcal{A}$\, of bounded b-linear functionals on \,$ X \,\times\, \left<\,b_{\,2}\,\right> \,\times\, \cdots \,\times\, \left<\,b_{\,n}\,\right>$\, is pointwise bounded, then it is uniformly bounded.
\end{theorem}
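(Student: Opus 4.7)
The plan is to reduce the assertion to the classical uniform boundedness principle (Theorem \ref{th0.01}) by replacing each bounded $b$-linear functional with an ordinary bounded linear functional on a suitable Banach space built from $X$. For each $T\in\mathcal{A}$, I first associate the map $\phi_{T}:X\to\mathbb{K}$ given by $\phi_{T}(x)=T(x,b_{2},\ldots,b_{n})$. Since $T$ is $b$-linear and the tuple $(b_{2},\ldots,b_{n})$ is fixed, $\phi_{T}$ is linear in $x$, and the defining bound of the $b$-norm yields $|\phi_{T}(x)|\le\|T\|\,\|x,b_{2},\ldots,b_{n}\|$; that is, $\phi_{T}$ is bounded with respect to the seminorm $p(x)=\|x,b_{2},\ldots,b_{n}\|$, with seminorm-norm equal to $\|T\|$.

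Next I would pass to the quotient $\tilde{X}=X/N$, where $N=\{x\in X:p(x)=0\}$ (equivalently, those $x$ that are linearly dependent with $b_{2},\ldots,b_{n}$). On $\tilde{X}$ the functional $\|[x]\|=p(x)$ is a genuine norm, each $\phi_{T}$ factors through a well-defined bounded linear functional $\tilde\phi_{T}$ on $\tilde{X}$ with $\|\tilde\phi_{T}\|=\|T\|$, and the pointwise boundedness of $\mathcal{A}$ on $X$ transfers verbatim to pointwise boundedness of $\{\tilde\phi_{T}:T\in\mathcal{A}\}$ on $\tilde{X}$. I would then argue by contradiction: if $\sup\{\|T\|:T\in\mathcal{A}\}=\infty$, extract a sequence $\{T_{k}\}\subseteq\mathcal{A}$ with $\|T_{k}\|\to\infty$ and apply Theorem \ref{th0.01} to the sequence $\{\tilde\phi_{T_{k}}\}$ viewed as bounded linear operators from $\tilde{X}$ into $\mathbb{K}$; the conclusion $\sup_{k}\|\tilde\phi_{T_{k}}\|<\infty$ contradicts $\|T_{k}\|\to\infty$.

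The place where I expect the real work to sit is the verification that $\tilde{X}$ is complete in the norm induced by $p$, so that Theorem \ref{th0.01} is applicable. A Cauchy sequence $\{[x_{k}]\}$ in $\tilde{X}$ only gives me a representative sequence $\{x_{k}\}$ in $X$ that is Cauchy in the single seminorm $\|\cdot,b_{2},\ldots,b_{n}\|$, whereas the $n$-Banach completeness of $X$ is formulated in terms of Cauchy-ness in \emph{every} seminorm $\|\cdot,y_{2},\ldots,y_{n}\|$. Bridging this gap is the main obstacle, and I would attempt to handle it either by passing to a subsequence with geometrically decaying $p$-differences so that an $n$-Banach limit can be constructed along the directions $(b_{2},\ldots,b_{n})$, by reducing modulo $\mathrm{span}(b_{2},\ldots,b_{n})$ and exploiting properties from \cite{Prasenjit}, or, failing that, by working with the completion $\widehat{\tilde{X}}$ and extending each $\tilde\phi_{T}$ by continuity (carefully checking that pointwise boundedness is not lost in this extension, since values on the completion need only be accessed via limits of points already lying in $\tilde{X}$ for the contradiction). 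Once completeness is in hand, the rest of the argument is mechanical.
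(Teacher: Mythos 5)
This theorem is not proved in the present paper at all: it is quoted from \cite{Prasenjit}, where it is established by a Baire-category argument carried out directly inside the $n$-Banach space (closed sets $F_k=\{x\in X:\ |T(x,b_2,\dots,b_n)|\le k\ \ \forall\,T\in\mathcal{A}\}$ covering $X$, Baire's theorem for $n$-Banach spaces producing a ball inside some $F_{k_0}$, then translation and scaling). Your proposal instead tries to reduce to the classical uniform boundedness principle by quotienting out the null space of the single seminorm $p(x)=\|x,b_2,\dots,b_n\|$. That reduction is clean as far as it goes, and you have correctly located the one point on which everything turns: you must show that $\tilde{X}=X/N$ is \emph{complete} in the norm induced by $p$. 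But this is a genuine gap, not a technicality, and none of your three proposed repairs closes it. The $n$-Banach hypothesis gives convergence only for sequences that are Cauchy with respect to \emph{every} tuple $(y_2,\dots,y_n)$ simultaneously; a sequence of representatives that is Cauchy in the single seminorm $p$ carries no information about the other seminorms, so there is no mechanism for manufacturing a limit in $X$. Passing to a rapidly $p$-convergent subsequence does not help for the same reason: you still have no candidate limit.

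The completion route is the most tempting of your three options and is precisely the one that fails. Extending each $\tilde{\phi}_T$ by continuity to $\widehat{\tilde{X}}$ preserves $\|\tilde{\phi}_T\|=\|T\|$, but pointwise boundedness of a family on a dense subspace of a Banach space does \emph{not} imply pointwise boundedness on the whole space --- that failure is exactly why the uniform boundedness principle needs completeness in the first place (consider $f_k(x)=k\,x_k$ on $c_{00}\subset\ell^{\infty}$: pointwise bounded on $c_{00}$, with $\|f_k\|=k$). So after completing you cannot verify the hypothesis of Theorem \ref{th0.01}, and the argument circles back to where it started. To actually prove the statement you should not leave the $n$-Banach space: show each $F_k$ above is closed in the sense of Definition 2.6 (this is immediate from $|T(x_j,b_2,\dots,b_n)-T(x,b_2,\dots,b_n)|\le\|T\|\,\|x_j-x,b_2,\dots,b_n\|$), observe $X=\bigcup_k F_k$ by pointwise boundedness, and invoke a Baire category theorem for $n$-Banach spaces, as in \cite{Prasenjit} and \cite{Soenjaya}; the uniform bound $\|T\|\le 2k_0/r$ then falls out of a ball $B_{\{b_2,\dots,b_n\}}(x_0,r)\subseteq F_{k_0}$ in the usual way.
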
 
 
\begin{theorem}\cite{Prasenjit}\label{th1.1}
Let \,$X$\, be a linear n-normed space over the field \,$\mathbb{R}$\, and \,$W$\, be a subspace of $\,X$.\;Then each bounded b-linear functional \,$T_{\,W}$\, defined on \,$W \,\times\, \left<\,b_{\,2}\,\right> \,\times\, \cdots \,\times\, \left<\,b_{\,n}\,\right>$\, can be extended onto \,$X \,\times\, \left<\,b_{\,2}\,\right> \,\times\, \cdots \,\times\, \left<\,b_{\,n}\,\right>$\, with preservation of the norm.\;In other words, there exists a bounded b-linear functional \,$T$\, defined on \,$X \,\times\, \left<\,b_{\,2}\,\right> \,\times\, \cdots \,\times\, \left<\,b_{\,n}\,\right>$\, such that
\[T\,(\,x \,,\, b_{\,2} \,,\, \cdots \,,\, b_{\,n}\,) \,=\, T_{\,W}\,(\,x \,,\, b_{\,2} \,,\, \cdots \,,\, b_{\,n}\,)\; \;\forall\; x \,\in\, W\,  \;\;\&\;\; \left\|\,T_{\,W}\,\right\| \,=\, \|\,T\,\|.\] 
\end{theorem}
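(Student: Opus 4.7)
The plan is to reduce this to the classical real Hahn--Banach extension theorem by freezing the last $n-1$ arguments at $b_2,\ldots,b_n$ and introducing an appropriate sublinear majorant on $X$.

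First I would fix the notation by viewing the bounded $b$-linear functional $T_W$ as an ordinary linear functional $\varphi_W \,:\, W \,\to\, \mathbb{R}$ given by $\varphi_W(x) \,=\, T_W(x,b_2,\ldots,b_n)$. Linearity of $\varphi_W$ in $x$ is immediate from conditions (I) and (II) in the definition of a $b$-linear functional, and the bound $|\varphi_W(x)| \,\leq\, \|T_W\|\,\|x,b_2,\ldots,b_n\|$ is the boundedness hypothesis. Next I would define $p \,:\, X \,\to\, \mathbb{R}$ by
\[
p(x) \,=\, \|T_W\|\,\left\|\,x,b_2,\ldots,b_n\,\right\|.
\]
I would check that $p$ is a sublinear functional on $X$: positive homogeneity $p(\alpha x) \,=\, \alpha\,p(x)$ for $\alpha \,\geq\, 0$ follows from axiom (N3) of the $n$-norm, and subadditivity $p(x+y) \,\leq\, p(x) \,+\, p(y)$ follows from (N4). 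By construction, $\varphi_W(x) \,\leq\, |\varphi_W(x)| \,\leq\, p(x)$ for every $x \,\in\, W$.

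Then I would invoke the classical real Hahn--Banach theorem (for a real vector space $X$, a subspace $W$, a sublinear functional $p$, and a linear $\varphi_W$ dominated by $p$ on $W$) to obtain a linear extension $\varphi \,:\, X \,\to\, \mathbb{R}$ such that $\varphi|_W \,=\, \varphi_W$ and $\varphi(x) \,\leq\, p(x)$ for every $x \,\in\, X$. Applying the latter bound to $-x$ also yields $-\varphi(x) \,\leq\, p(-x) \,=\, p(x)$ (using (N3) with $\alpha \,=\, -1$), so $|\varphi(x)| \,\leq\, p(x)$ for all $x \,\in\, X$. Now I would define
\[
T(x,b_2,\ldots,b_n) \,=\, \varphi(x),\qquad x \,\in\, X,
\]
and extend multilinearly in the remaining arguments along the $1$-dimensional subspaces $\langle b_i\rangle$ by $T(x,\alpha_2 b_2,\ldots,\alpha_n b_n) \,=\, \alpha_2\cdots\alpha_n\,\varphi(x)$; this automatically satisfies (I) and (II), and agrees with $T_W$ on $W$.

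Finally I would verify the norm equality. From $|T(x,b_2,\ldots,b_n)| \,=\, |\varphi(x)| \,\leq\, p(x) \,=\, \|T_W\|\,\|x,b_2,\ldots,b_n\|$ we get $\|T\| \,\leq\, \|T_W\|$, while the reverse inequality $\|T\| \,\geq\, \|T_W\|$ is immediate because the supremum defining $\|T\|$ is taken over a larger set than the supremum defining $\|T_W\|$. The main subtlety — and the only place where the $n$-norm structure enters non-trivially — is checking that $p$ really is sublinear on all of $X$, including on vectors $x$ for which $x,b_2,\ldots,b_n$ are linearly dependent (where $p(x) \,=\, 0$ forces $\varphi(x) \,=\, 0$); but this is a free consequence of (N1), (N3), (N4), so the rest of the argument is a direct transcription of the classical Hahn--Banach proof.
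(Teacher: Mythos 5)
Your proof is correct, and the reduction to the classical real Hahn--Banach theorem via the sublinear majorant $p(x) \,=\, \left\|\,T_{\,W}\,\right\|\,\left\|\,x \,,\, b_{\,2} \,,\, \cdots \,,\, b_{\,n}\,\right\|$ is exactly the standard argument; note that the present paper states this theorem without proof, importing it from the cited reference \cite{Prasenjit}, which uses the same reduction. All the points you flag --- sublinearity of $p$ from (N3)--(N4), the two-sided bound $|\,\varphi\,| \,\leq\, p$ obtained by applying the domination to $-\,x$, and the norm equality from combining the bound with restriction to $W$ --- check out, so there is nothing to add.
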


\begin{theorem}\cite{Prasenjit}\label{th1.2}
Let \,$X$\, be a linear n-normed space over the field \,$\mathbb{R}$\, and \,$x_{\,0}$\, be an arbitrary non-zero element in \,$X$.\;Then there exists a bounded b-linear functional \,$T$\, defined on \,$X \,\times\, \left<\,b_{\,2}\,\right> \,\times\, \cdots \,\times\, \left<\,b_{\,n}\,\right>$\, such that 
\[ \|\,T\,\| \,=\, 1\; \;\;\text{and}\;\;  \;T\,(\,x_{\,0} \,,\, b_{\,2} \,,\, \cdots \,,\, b_{\,n}\,) \,=\, \left\|\,x_{\,0} \,,\, b_{\,2} \,,\, \cdots \,,\, b_{\,n}\,\right\|.\]
\end{theorem}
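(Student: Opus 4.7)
The plan is to reduce the statement to Theorem \ref{th1.1} (the Hahn-Banach extension theorem for bounded $b$-linear functionals) by first constructing the target functional on the one-dimensional subspace spanned by $x_{\,0}$ and then extending. Concretely, I would set $W \,=\, \left<\,x_{\,0}\,\right> \,=\, \{\,\alpha\,x_{\,0} \,:\, \alpha \,\in\, \mathbb{R}\,\}$, which is a subspace of $X$, and define $T_{\,W} \,:\, W \,\times\, \left<\,b_{\,2}\,\right> \,\times\, \cdots \,\times\, \left<\,b_{\,n}\,\right> \,\to\, \mathbb{R}$ by
\[
T_{\,W}\,(\,\alpha\,x_{\,0} \,,\, b_{\,2} \,,\, \cdots \,,\, b_{\,n}\,) \,=\, \alpha\, \left\|\,x_{\,0} \,,\, b_{\,2} \,,\, \cdots \,,\, b_{\,n}\,\right\|.
\]
Since $x_{\,0} \,\neq\, 0$, the scalar $\alpha$ in the representation $\alpha\,x_{\,0}$ is uniquely determined, so $T_{\,W}$ is well-defined, and $b$-linearity in the first slot is immediate from this formula.

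Next, I would verify that $\left\|\,T_{\,W}\,\right\| \,=\, 1$. For every $\alpha \,\in\, \mathbb{R}$,
\[
\left|\,T_{\,W}\,(\,\alpha\,x_{\,0} \,,\, b_{\,2} \,,\, \cdots \,,\, b_{\,n}\,)\,\right| \,=\, |\,\alpha\,|\, \left\|\,x_{\,0} \,,\, b_{\,2} \,,\, \cdots \,,\, b_{\,n}\,\right\| \,=\, \left\|\,\alpha\,x_{\,0} \,,\, b_{\,2} \,,\, \cdots \,,\, b_{\,n}\,\right\|,
\]
which gives $\left\|\,T_{\,W}\,\right\| \,\leq\, 1$; choosing $\alpha \,=\, 1$ shows the supremum is attained, so $\left\|\,T_{\,W}\,\right\| \,=\, 1$. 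I would then invoke Theorem \ref{th1.1} to extend $T_{\,W}$ to a bounded $b$-linear functional $T$ on $X \,\times\, \left<\,b_{\,2}\,\right> \,\times\, \cdots \,\times\, \left<\,b_{\,n}\,\right>$ with $\|\,T\,\| \,=\, \left\|\,T_{\,W}\,\right\| \,=\, 1$, and by the extension property
\[
T\,(\,x_{\,0} \,,\, b_{\,2} \,,\, \cdots \,,\, b_{\,n}\,) \,=\, T_{\,W}\,(\,x_{\,0} \,,\, b_{\,2} \,,\, \cdots \,,\, b_{\,n}\,) \,=\, \left\|\,x_{\,0} \,,\, b_{\,2} \,,\, \cdots \,,\, b_{\,n}\,\right\|,
\]
which is exactly the claimed identity.

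\textbf{Anticipated obstacle.} No substantive difficulty is expected; the argument is the standard transplant of the classical Hahn-Banach corollary into the $b$-linear setting. The only delicate point to watch is the degenerate case in which $x_{\,0},\, b_{\,2},\, \cdots,\, b_{\,n}$ are linearly dependent, so that the right-hand side of the target equation vanishes and $T_{\,W}$ as defined above is identically zero. In that case the equation $T\,(\,x_{\,0} \,,\, b_{\,2} \,,\, \cdots \,,\, b_{\,n}\,) \,=\, 0$ is trivial and the norm-one condition can be satisfied separately by extending any suitably normalized bounded $b$-linear functional defined on a different one-dimensional subspace of $X$. The main content of the theorem is therefore the linearly independent case, handled by the three-step construction above.
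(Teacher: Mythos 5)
Your proposal is correct and follows essentially the same route as the paper: the paper only cites this result from its companion paper, but its own proof of the more general Theorem \ref{th3.1} (together with the Remark deriving Theorem \ref{th1.2} from it by taking $W = \{\,0\,\}$ and rescaling) is exactly your construction --- define the functional on $\left<\,x_{\,0}\,\right>$, compute its norm to be $1$, and extend by Theorem \ref{th1.1}. The degenerate case you flag, where $\left\|\,x_{\,0},\, b_{\,2},\, \cdots,\, b_{\,n}\,\right\| = 0$ although $x_{\,0} \neq \theta$, is a genuine (if minor) gap in the statement as written, and the paper glosses over it as well.
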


\begin{theorem}\cite{Prasenjit}\label{th1.3}
Let \,$X$\, be a linear n-normed space over the field \,$\mathbb{R}$\, and \,$\,x \,\in\, X$.\;Then 
\[\left\|\,x \,,\, b_{\,2} \,,\, \cdots \,,\, b_{\,n}\,\right\| \,=\, \sup\,\left\{\, \dfrac{\left|\,T\,(\,x \,,\, b_{\,2} \,,\, \cdots \,,\, b_{\,n}\,)\,\right|}{\|\,T\,\|} \,:\, T \,\in\, X^{\,\ast}_{F} \;,\; T \,\neq\, 0 \,\right\}.\]
\end{theorem}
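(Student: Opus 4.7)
The plan is to establish the two inequalities separately. Write $S(x) := \sup\left\{\,\frac{|T(x,b_{2},\cdots,b_{n})|}{\|T\|} : T \in X^{\ast}_{F},\, T \neq 0\,\right\}$ and denote $\|x,b_{2},\cdots,b_{n}\|$ by $\mathcal{N}(x)$ for brevity.

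For the inequality $S(x) \leq \mathcal{N}(x)$, I would use only the definition of the norm of a bounded $b$-linear functional. For any nonzero $T \in X^{\ast}_{F}$, the last displayed inequality in the definition gives
\[\left|\,T(x,b_{2},\cdots,b_{n})\,\right| \,\leq\, \|T\|\cdot\mathcal{N}(x),\]
so $\frac{|T(x,b_{2},\cdots,b_{n})|}{\|T\|} \leq \mathcal{N}(x)$. Taking the supremum over all such $T$ yields $S(x) \leq \mathcal{N}(x)$.

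For the reverse inequality $\mathcal{N}(x) \leq S(x)$, I would split into cases. If $\mathcal{N}(x) = 0$, the inequality is trivial. Otherwise $\mathcal{N}(x) > 0$ forces $x \neq 0$, so Theorem \ref{th1.2} applies and produces a bounded $b$-linear functional $T_{0}$ on $X \times \langle b_{2}\rangle \times \cdots \times \langle b_{n}\rangle$ with $\|T_{0}\| = 1$ and $T_{0}(x,b_{2},\cdots,b_{n}) = \mathcal{N}(x)$. Then
\[\frac{\left|\,T_{0}(x,b_{2},\cdots,b_{n})\,\right|}{\|T_{0}\|} \,=\, \mathcal{N}(x),\]
so this particular $T_{0}$ shows $S(x) \geq \mathcal{N}(x)$. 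Combining the two inequalities gives the equality asserted in the theorem.

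The plan is essentially immediate once Theorem \ref{th1.2} is available, so there is no real obstacle; the only mild subtlety is handling the degenerate case where $x, b_{2},\cdots,b_{n}$ are linearly dependent (whence $\mathcal{N}(x) = 0$), which must be treated separately because Theorem \ref{th1.2} is invoked with the assumption $x \neq 0$ but is only useful in the context where $\mathcal{N}(x) > 0$. In that degenerate case the bound from step one forces $T(x,b_{2},\cdots,b_{n}) = 0$ for every bounded $b$-linear functional $T$, so both sides are zero and equality holds trivially.
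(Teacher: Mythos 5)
Your proof is correct: the upper bound follows from the defining inequality $|T(x,b_{2},\cdots,b_{n})| \leq \|T\|\,\|x,b_{2},\cdots,b_{n}\|$, the lower bound from the norming functional supplied by Theorem (\ref{th1.2}), and you rightly isolate the degenerate case $\|x,b_{2},\cdots,b_{n}\| = 0$ where that theorem cannot be usefully invoked. The paper states this result without proof (it is quoted from \cite{Prasenjit}), but your argument is precisely the standard derivation from the Hahn-Banach consequence recorded there, so there is nothing to add.
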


\section{Consequences of Hahn-Banach theorem in linear\;$n$-normed space }

\smallskip\hspace{.6 cm} In this section, we shall consider some immediate corollaries and important consequences of the Hahn-Banach extension theorem for bounded \,$b$-linear functional \cite{Prasenjit} in case of linear\;$n$-normed space.

\begin{theorem}\label{th2}
Let \,$X$\, be a linear n-normed space over the field \,$\mathbb{R}$\, and let \,$x,\, y$\, be two distinct points of \,$X$\, such that the set \,$\left\{\,x,\, b_{\,2},\, \cdots,\, b_{\,n}\,\right\}$\, or \,$\left\{\,y,\, b_{\,2},\, \cdots,\, b_{\,n}\,\right\}$\, are linearly independent.\;Then \,$\exists$\, \,$T \,\in\, X_{F}^{\,\ast}$\, such that 
\[ T\,(\,x \,,\, b_{\,2} \,,\, \cdots \,,\, b_{\,n}\,) \,\neq\, T\,(\,y \,,\, b_{\,2} \,,\, \cdots \,,\, b_{\,n}\,).\]
\end{theorem}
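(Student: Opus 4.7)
The plan is to reduce the statement to an application of Theorem \ref{th1.2}, with the element $z := x - y$ playing the role of the arbitrary nonzero vector. Since any $T \in X_F^{\,\ast}$ is linear in its first slot,
\[
T\,(\,x \,,\, b_{\,2} \,,\, \cdots \,,\, b_{\,n}\,) \,-\, T\,(\,y \,,\, b_{\,2} \,,\, \cdots \,,\, b_{\,n}\,) \;=\; T\,(\,x \,-\, y \,,\, b_{\,2} \,,\, \cdots \,,\, b_{\,n}\,) \;=\; T\,(\,z \,,\, b_{\,2} \,,\, \cdots \,,\, b_{\,n}\,),
\]
so it is enough to produce a single $T \in X_F^{\,\ast}$ with $T\,(\,z \,,\, b_{\,2} \,,\, \cdots \,,\, b_{\,n}\,) \,\neq\, 0$. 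Note that $z \neq 0$ since $x$ and $y$ are distinct.

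Once $z$ is in hand, the route through Theorem \ref{th1.2} is immediate provided one knows that $\{\,z,\, b_{\,2},\, \cdots,\, b_{\,n}\,\}$ is linearly independent, for then $\|\,z,\, b_{\,2},\, \cdots,\, b_{\,n}\,\| \,>\, 0$ by axiom (N1), and Theorem \ref{th1.2} yields a $T \in X_F^{\,\ast}$ with $\|\,T\,\| \,=\, 1$ and $T\,(\,z \,,\, b_{\,2} \,,\, \cdots \,,\, b_{\,n}\,) \,=\, \|\,z \,,\, b_{\,2} \,,\, \cdots \,,\, b_{\,n}\,\|$, hence strictly nonzero. So the key intermediate task is to deduce the linear independence of $\{\,x-y,\, b_{\,2},\, \cdots,\, b_{\,n}\,\}$ from the hypothesis. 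Without loss of generality, assume $\{\,x,\, b_{\,2},\, \cdots,\, b_{\,n}\,\}$ is linearly independent (in particular, $\{\,b_{\,2},\, \cdots,\, b_{\,n}\,\}$ is linearly independent). From any relation $\alpha\,(x - y) \,+\, \sum_{i=2}^{n}\beta_{\,i}\, b_{\,i} \,=\, 0$, the subcase $\alpha \,=\, 0$ forces every $\beta_{\,i}$ to vanish by the independence of $\{\,b_{\,2},\, \cdots,\, b_{\,n}\,\}$.

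The main obstacle lies in ruling out the subcase $\alpha \,\neq\, 0$: here one would rewrite the relation as $x \,=\, y \,-\, \alpha^{-1}\sum \beta_{\,i}\, b_{\,i}$ and try to extract a contradiction from the independence of $\{\,x,\, b_{\,2},\, \cdots,\, b_{\,n}\,\}$ (or, if needed, exchange the roles of $x$ and $y$ and invoke the alternative hypothesis on $\{\,y,\, b_{\,2},\, \cdots,\, b_{\,n}\,\}$). I expect this linear-independence bookkeeping, rather than the functional-analytic content, to be the only delicate part; once it is settled, the conclusion follows in one line by applying Theorem \ref{th1.2} to $z$ and invoking the $b$-linearity of $T$ displayed above.
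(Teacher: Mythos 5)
Your route is exactly the paper's: set $z = x - y$, apply Theorem \ref{th1.2} to $z$ to obtain $T \in X_F^{\,\ast}$ with $\|T\| = 1$ and $T(z, b_{\,2}, \cdots, b_{\,n}) = \|z, b_{\,2}, \cdots, b_{\,n}\|$, and use $b$-linearity in the first slot to conclude. The only difference is that you explicitly isolate the step the paper glosses over: the paper simply asserts $\|x - y, b_{\,2}, \cdots, b_{\,n}\| \neq 0$ with no justification, whereas you correctly recognize that this requires the linear independence of $\{x - y, b_{\,2}, \cdots, b_{\,n}\}$ and defer that verification as the ``delicate part.''

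That deferred step is a genuine gap, and in fact it cannot be closed from the stated hypotheses. In your subcase $\alpha \neq 0$, the relation $x = y - \alpha^{-1}\sum_{i=2}^{n}\beta_{\,i} b_{\,i}$ produces no contradiction: independence of $\{x, b_{\,2}, \cdots, b_{\,n}\}$ (or of $\{y, b_{\,2}, \cdots, b_{\,n}\}$) says nothing about how $x - y$ sits relative to $\mathrm{span}\{b_{\,2}, \cdots, b_{\,n}\}$. Concretely, take $y = x - b_{\,2}$ with $b_{\,2} \neq \theta$ and $\{x, b_{\,2}, \cdots, b_{\,n}\}$ linearly independent. Then $x \neq y$ and the theorem's hypothesis holds, but $z = x - y = b_{\,2}$, so $\|z, b_{\,2}, \cdots, b_{\,n}\| = 0$ by (N1), and hence for \emph{every} $T \in X_F^{\,\ast}$ one has $|T(x, b_{\,2}, \cdots, b_{\,n}) - T(y, b_{\,2}, \cdots, b_{\,n})| = |T(z, b_{\,2}, \cdots, b_{\,n})| \leq \|T\|\,\|z, b_{\,2}, \cdots, b_{\,n}\| = 0$. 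No bounded $b$-linear functional separates $x$ from $y$, so the statement is false as given; the hypothesis actually needed is that $\{x - y, b_{\,2}, \cdots, b_{\,n}\}$ itself be linearly independent, and under that hypothesis your argument (which is the paper's) finishes in one line. Your instinct that the linear-independence bookkeeping was the only delicate point was exactly right --- it is where both your proposal and the paper's proof break down.
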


\begin{proof}
Consider, \,$z \,=\, x \,-\, y$.\;Then \,$\theta \,\neq\, z \,\in\, X$\, and therefore by Theorem (\ref{th1.2}), \,$\exists\, \;T \,\in\, X^{\,\ast}_{F}$\; such that 
\[T\,(\,z \,,\, b_{\,2} \,,\, \cdots \,,\, b_{\,n}\,) \,=\, \left\|\,z \,,\, b_{\,2} \,,\, \cdots \,,\, b_{\,n}\,\right\|\; \;\text{and}\;\; \|\,T\,\| \,=\, 1\]
\[\Rightarrow\; T\,(\,x \,-\, y \,,\, b_{\,2} \,,\, \cdots \,,\, b_{\,n}\,) \,=\, \left\|\,x \,-\, y \,,\, b_{\,2} \,,\, \cdots \,,\, b_{\,n}\,\right\| \,\neq\, 0\hspace{.5cm}\]
\[\Rightarrow\; T\,(\,x \,,\, b_{\,2} \,,\, \cdots \,,\, b_{\,n}\,) \,-\, T\,(\,y \,,\, b_{\,2} \,,\, \cdots \,,\, b_{\,n}\,) \,\neq\, 0\hspace{2cm}\] 
\[\Rightarrow\; T\,(\,x \,,\, b_{\,2} \,,\, \cdots \,,\, b_{\,n}\,) \,\neq\, T\,(\,y \,,\, b_{\,2} \,,\, \cdots \,,\, b_{\,n}\,).\hspace{2.64cm}\]
\end{proof}

\begin{corollary}\label{cor1}
If \,$X \,\neq\, \{\,\theta\,\}$\, is a linear n-normed space, then there are always non-trivial bounded b-linear functionals on \,$X \,\times\, \left<\,b_{\,2}\,\right> \,\times\, \cdots \,\times\, \left<\,b_{\,n}\,\right>$, i.\,e., \,$X \,\neq\, \{\,\theta\,\} \,\Rightarrow\, X^{\,\ast}_{F} \,\neq\, \{\,O\,\},\, O$\, being a null operator.
\end{corollary}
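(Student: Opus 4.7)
The plan is to deduce this directly from Theorem \ref{th1.2}, which is the $n$-normed analogue of the classical Hahn--Banach consequence producing a norming functional at a prescribed point. Since $X \neq \{\theta\}$, there exists at least one element $x_{\,0} \,\in\, X$ with $x_{\,0} \,\neq\, \theta$. Applying Theorem \ref{th1.2} to this $x_{\,0}$, I would obtain a bounded $b$-linear functional $T \,\in\, X^{\,\ast}_{F}$ satisfying $\|\,T\,\| \,=\, 1$ and $T\,(\,x_{\,0} \,,\, b_{\,2} \,,\, \cdots \,,\, b_{\,n}\,) \,=\, \left\|\,x_{\,0} \,,\, b_{\,2} \,,\, \cdots \,,\, b_{\,n}\,\right\|$.

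The only remaining step is to observe that such a $T$ is automatically non-trivial: if $T$ were the null operator $O$, then every expression $T\,(\,x \,,\, b_{\,2} \,,\, \cdots \,,\, b_{\,n}\,)$ would vanish, forcing $\|\,T\,\| \,=\, 0$, contradicting $\|\,T\,\| \,=\, 1$. Hence $T \,\neq\, O$ and $X^{\,\ast}_{F} \,\neq\, \{\,O\,\}$.

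There is essentially no obstacle here; the result is a one-line corollary of Theorem \ref{th1.2}. The only mild subtlety is that we cannot directly cite Theorem \ref{th2} as an alternative route, because Theorem \ref{th2} requires a linear independence hypothesis on $\{\,x,\, b_{\,2},\, \cdots,\, b_{\,n}\,\}$ or $\{\,y,\, b_{\,2},\, \cdots,\, b_{\,n}\,\}$, which is not assumed in the hypotheses of the corollary. Routing the argument through Theorem \ref{th1.2} avoids this issue, since that theorem only needs $x_{\,0}$ to be a non-zero vector in $X$ and produces a functional whose norm is $1$, which by itself suffices to rule out triviality regardless of any relationship between $x_{\,0}$ and the fixed vectors $b_{\,2},\, \cdots,\, b_{\,n}$.
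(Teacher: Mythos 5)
Your proposal is correct and follows essentially the same route as the paper, which simply declares the corollary an immediate consequence of Theorem (\ref{th1.2}); you merely spell out the details (pick a non-zero $x_{\,0}$, invoke the norming functional with $\|\,T\,\| \,=\, 1$, and note that a norm-one functional cannot be the null operator). Your observation that the conclusion rests on $\|\,T\,\| \,=\, 1$ rather than on the value $T\,(\,x_{\,0} \,,\, b_{\,2} \,,\, \cdots \,,\, b_{\,n}\,)$ is a sensible precaution, since that value could vanish when $x_{\,0},\, b_{\,2},\, \cdots,\, b_{\,n}$ are linearly dependent, but it does not change the argument.
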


\begin{proof}
This is an immediate consequence of Theorem (\ref{th1.2}). 
\end{proof}

\begin{corollary}\label{cor2}
Let \,$X$\, be a linear n-normed space.\;Then for all \,$T \,\in\, X^{\,\ast}_{F}$,
\[T\,(\,x \,,\, b_{\,2} \,,\, \cdots \,,\, b_{\,n}\,) \,=\, 0\; \,\Rightarrow\,  x \,=\, \theta.\]
\end{corollary}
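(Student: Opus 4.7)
The plan is to read the conclusion directly off Theorem \ref{th1.3}. Assume $T\,(\,x \,,\, b_{\,2} \,,\, \cdots \,,\, b_{\,n}\,) \,=\, 0$ for every $T \,\in\, X^{\,\ast}_{F}$. Substituting this into the supremum formula
\[
\left\|\,x \,,\, b_{\,2} \,,\, \cdots \,,\, b_{\,n}\,\right\| \,=\, \sup\,\left\{\, \dfrac{\left|\,T\,(\,x \,,\, b_{\,2} \,,\, \cdots \,,\, b_{\,n}\,)\,\right|}{\|\,T\,\|} \,:\, T \,\in\, X^{\,\ast}_{F} \;,\; T \,\neq\, 0 \,\right\}
\]
collapses the right-hand side to $0$, and then axiom (N1) of the $n$-norm forces $\{x,\,b_{\,2},\,\cdots,\,b_{\,n}\}$ to be linearly dependent. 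Under the standing convention that $b_{\,2},\,\cdots,\,b_{\,n}$ are chosen so that dependence of $\{x,\,b_{\,2},\,\cdots,\,b_{\,n}\}$ is equivalent to $x \,=\, \theta$, the corollary follows in two lines.

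An essentially equivalent route, phrased as a contrapositive, would use Theorem \ref{th1.2} directly: if $x \,\neq\, \theta$, then Theorem \ref{th1.2} produces $T \,\in\, X^{\,\ast}_{F}$ with $\|\,T\,\| \,=\, 1$ and $T\,(\,x \,,\, b_{\,2} \,,\, \cdots \,,\, b_{\,n}\,) \,=\, \left\|\,x \,,\, b_{\,2} \,,\, \cdots \,,\, b_{\,n}\,\right\|$, which is nonzero in the non-degenerate case, immediately contradicting the hypothesis that $T$ annihilates $x$ for every $T$. I would mention this alternative only in passing, since the Theorem \ref{th1.3} route is the cleanest.

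I do not expect any genuine obstacle: this corollary is essentially a rewording of Theorem \ref{th1.3}. The one point deserving explicit care is the gap between the strict conclusion of axiom (N1), namely linear dependence of the tuple $\{x,\,b_{\,2},\,\cdots,\,b_{\,n}\}$, and the stated conclusion $x \,=\, \theta$. I would therefore either record the linear-independence assumption on $b_{\,2},\,\cdots,\,b_{\,n}$ (together with $x \,\notin\, \langle b_{\,2},\,\cdots,\,b_{\,n}\rangle$) as a standing hypothesis of the section, or state the conclusion in the weaker form that $\{x,\,b_{\,2},\,\cdots,\,b_{\,n}\}$ is linearly dependent, so that the final implication is logically airtight.
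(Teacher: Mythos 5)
Your proposal is correct and, in substance, travels the same road as the paper: the paper argues by contradiction, assuming $x \,\neq\, \theta$ and invoking the existence of a functional $T$ with $T\,(\,x \,,\, b_{\,2} \,,\, \cdots \,,\, b_{\,n}\,) \,\neq\, 0$ (it cites Corollary \ref{cor1}, though what is really needed is Theorem \ref{th1.2}), which is precisely your ``alternative'' contrapositive route. Your primary route via the supremum formula of Theorem \ref{th1.3} is only cosmetically different, since that theorem is itself a repackaging of Theorem \ref{th1.2}; it buys a slightly cleaner two-line derivation of $\left\|\,x \,,\, b_{\,2} \,,\, \cdots \,,\, b_{\,n}\,\right\| \,=\, 0$ before appealing to (N1).

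The genuinely valuable part of your write-up is the caveat you raise at the end, and you are right to insist on it: axiom (N1) only yields linear dependence of $\{\,x,\, b_{\,2},\, \cdots,\, b_{\,n}\,\}$, and if $x$ is a nonzero element of $\left<\,b_{\,2},\, \cdots,\, b_{\,n}\,\right>$ then $\left\|\,x \,,\, b_{\,2} \,,\, \cdots \,,\, b_{\,n}\,\right\| \,=\, 0$, so \emph{every} $T \,\in\, X^{\,\ast}_{F}$ annihilates $x$ while $x \,\neq\, \theta$. The paper's own proof silently skips over this: the $T$ produced by Theorem \ref{th1.2} satisfies $T\,(\,x \,,\, b_{\,2} \,,\, \cdots \,,\, b_{\,n}\,) \,=\, \left\|\,x \,,\, b_{\,2} \,,\, \cdots \,,\, b_{\,n}\,\right\|$, which is nonzero only when the tuple is linearly independent. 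So the corollary as stated is false without a non-degeneracy hypothesis, and your suggestion to either add that hypothesis or weaken the conclusion to linear dependence is the correct repair.
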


\begin{proof}
If possible let \,$x \,\neq\, \theta$.\;Then by the Corollary (\ref{cor1}), \,$\exists\, \,T \,\in\, X^{\,\ast}_{F}$\; such that \,$T\,(\,x \,,\, b_{\,2} \,,\, \cdots \,,\, b_{\,n}\,) \,\neq\, 0$.\;This is a contradiction to the given hypothesis.\;Hence the results follows.  
\end{proof}

We now proceed to present another implication of the Hahn-Banach theorem for bounded \,$b$-linear functional and establish that there are always sufficient bounded \,$b$-linear functionals on a linear $n$-normed space which separate points from proper subspaces. 

\begin{theorem}\label{th3.1}
Let \,$X$\; be a linear n-normed space over the field \,$\mathbb{R}$\, and \,$W$\, be a subspace of \,$X$\, and let \,$x_{\,0} \,\in\, X$\, such that \,$x_{\,0},\, b_{\,2},\, \cdots,\, b_{\,n}$\, are linearly independent and suppose \,$d \,=\, \inf\limits_{x \,\in\, W}\,\left\|\,x_{\,0} \,-\, x \,,\, b_{\,2} \,,\, \cdots \,,\, b_{\,n}\,\right\| \,>\, 0$.\;Then \,$\exists$\, \,$T \,\in\, X^{\,\ast}_{F}$\, such that
\begin{itemize}
\item[(I)]\hspace{.1cm} $T\,\left(\,x_{\,0} \,,\, b_{\,2} \,,\, \cdots \,,\, b_{\,n}\,\right) \,=\, 1$,  
\item[(II)]\hspace{.1cm} $T\,\left(\,x \,,\, b_{\,2} \,,\, \cdots \,,\, b_{\,n}\,\right) \,=\, 0\; \;\;\forall\; x \,\in\, W\;\; \;\text{and}\;\;  \;\|\,T\,\| \,=\, \dfrac{1}{d}$.
\end{itemize}
\end{theorem}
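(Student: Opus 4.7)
The plan is to mimic the classical separation consequence of Hahn--Banach, by first defining the desired functional on the smallest subspace where its values are forced by (I) and (II), and then extending to all of $X$ via Theorem \ref{th1.1}. Put $W_{0} \,=\, \left\{\,x \,+\, \alpha\, x_{\,0} \,:\, x \,\in\, W,\; \alpha \,\in\, \mathbb{R}\,\right\}$. Because $d \,>\, 0$, the vector $x_{\,0}$ cannot belong to $W$ (otherwise taking $x \,=\, x_{\,0}$ in the infimum would force $d \,=\, 0$), so every element of $W_{0}$ admits a \emph{unique} representation $x \,+\, \alpha\,x_{\,0}$. Define
\[T_{\,0}\,(\,x \,+\, \alpha\,x_{\,0} \,,\, b_{\,2} \,,\, \cdots \,,\, b_{\,n}\,) \,=\, \alpha.\]
Well-definedness follows from uniqueness of the decomposition, $b$-linearity in the first slot is immediate, and conditions (I) and (II) of the theorem hold by design: take $x \,=\, \theta,\, \alpha \,=\, 1$ for (I) and $\alpha \,=\, 0$ for (II).

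Next I would verify that $T_{\,0}$ is bounded and compute $\|\,T_{\,0}\,\| \,=\, 1/d$. For the upper bound, the case $\alpha \,=\, 0$ is trivial, and for $\alpha \,\neq\, 0$ the element $-\,x/\alpha$ lies in $W$, so
\[\left\|\,x \,+\, \alpha\, x_{\,0} \,,\, b_{\,2} \,,\, \cdots \,,\, b_{\,n}\,\right\| \,=\, |\,\alpha\,|\,\left\|\,x_{\,0} \,-\, (\,-\,x/\alpha\,) \,,\, b_{\,2} \,,\, \cdots \,,\, b_{\,n}\,\right\| \,\geq\, |\,\alpha\,|\, d,\]
which yields $\left|\,T_{\,0}\,(\,x \,+\, \alpha\, x_{\,0} \,,\, b_{\,2} \,,\, \cdots \,,\, b_{\,n}\,)\,\right| \,\leq\, (1/d)\,\left\|\,x \,+\, \alpha\, x_{\,0} \,,\, b_{\,2} \,,\, \cdots \,,\, b_{\,n}\,\right\|$. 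For the reverse inequality, pick a minimizing sequence $\{\,x_{\,k}\,\} \,\subset\, W$ with $\left\|\,x_{\,0} \,-\, x_{\,k} \,,\, b_{\,2} \,,\, \cdots \,,\, b_{\,n}\,\right\| \,\to\, d$; applying $T_{\,0}$ to $x_{\,0} \,-\, x_{\,k} \,\in\, W_{0}$ (written as $(\,-\,x_{\,k}\,) \,+\, 1 \cdot x_{\,0}$) gives $T_{\,0}\,(\,x_{\,0} \,-\, x_{\,k} \,,\, b_{\,2} \,,\, \cdots \,,\, b_{\,n}\,) \,=\, 1$, so $1 \,\leq\, \|\,T_{\,0}\,\|\,\left\|\,x_{\,0} \,-\, x_{\,k} \,,\, b_{\,2} \,,\, \cdots \,,\, b_{\,n}\,\right\|$, and letting $k \,\to\, \infty$ forces $\|\,T_{\,0}\,\| \,\geq\, 1/d$.

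Finally, invoke Theorem \ref{th1.1} to extend $T_{\,0}$ to a bounded $b$-linear functional $T$ on $X \,\times\, \left<\,b_{\,2}\,\right> \,\times\, \cdots \,\times\, \left<\,b_{\,n}\,\right>$ with $\|\,T\,\| \,=\, \|\,T_{\,0}\,\| \,=\, 1/d$. Properties (I) and (II) carry over to $T$ because both concern points of $W_{0}$, where $T$ agrees with $T_{\,0}$ by construction.

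The principal obstacle is the sharp norm equality $\|\,T_{\,0}\,\| \,=\, 1/d$, and within that the lower bound, which genuinely uses the infimum definition of $d$ through a minimizing sequence; the rest is routine set-up, save for the small but essential observation that $d \,>\, 0$ is precisely what makes the direct sum decomposition $W_{0} \,=\, W \,\oplus\, \left<\,x_{\,0}\,\right>$ legitimate, which is what allows $T_{\,0}$ to be defined at all.
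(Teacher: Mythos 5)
Your proposal is correct and follows essentially the same route as the paper: define the functional on $W_{0} \,=\, W \,+\, \left<\,x_{\,0}\,\right>$ by sending $x \,+\, \alpha\,x_{\,0}$ to $\alpha$, establish $\|\,T_{\,0}\,\| \,\leq\, 1/d$ from the lower bound $\left\|\,x \,+\, \alpha\,x_{\,0} \,,\, b_{\,2} \,,\, \cdots \,,\, b_{\,n}\,\right\| \,\geq\, |\,\alpha\,|\,d$, get the reverse inequality from a minimizing sequence, and extend by Theorem \ref{th1.1}. Your version of the upper bound is in fact slightly cleaner than the paper's, since the paper divides by $\left\|\,x \,,\, b_{\,2} \,,\, \cdots \,,\, b_{\,n}\,\right\|$ without first excluding the case where that quantity vanishes, whereas you argue the inequality directly.
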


\begin{proof}
Let \,$W_{\,0} \,=\, W \,+\, \left<\,x_{\,0}\,\right>$\; be the space spannded by \,$W$\, and \,$x_{\,0}$.\;Since \,$d \,>\, 0 $, we have \,$x_{\,0} \,\not\in W$.\;Therefore, each \,$x \,\in\, W_{0}$\, can be expressed uniquely in the form \,$x \,=\, y \,+\, \alpha\,x_{\,0},\; y \,\in\, W$\, and \,$\alpha \,\in\, \mathbb{R}$.\;We define a functional as follows: 
\[T_{\,1} \,:\, W_{0} \,\times\, \left<\,b_{\,2}\,\right> \,\times \cdots \,\times\, \left<\,b_{\,n}\,\right> \,\to\, \mathbb{R},\; T_{\,1}\,\left(\,y \,+\, \alpha\,x_{\,0} \,,\, b_{\,2} \,,\, \cdots \,,\, b_{\,n}\,\right) \,=\, \alpha.\]Then clearly \,$T_{\,1}$\, is a \,$b$-linear functional on \,$W_{0} \,\times\, \left<\,b_{\,2}\,\right> \,\times \cdots \,\times\, \left<\,b_{\,n}\,\right>$\, satisfying 
\[T_{\,1}\,\left(\,x \,,\, b_{\,2} \,,\, \cdots \,,\, b_{\,n}\,\right) \,=\, 0\; \;\;\forall\; x \,\in\, W\; \,\;\text{and}\,\; \,T_{\,1}\,\left(\,x_{\,0} \,,\, b_{\,2} \,,\, \cdots \,,\, b_{\,n}\,\right) \,=\, 1.\]Also, for each \,$x \,\in\, W_{0}$, we have
\[\left|\,T_{\,1}\,\left(\,x \,,\, b_{\,2} \,,\, \cdots \,,\, b_{\,n}\,\right)\,\right| \,=\, \left|\,T_{\,1}\,\left(\,y \,+\, \alpha\,x_{\,0} \,,\, b_{\,2} \,,\, \cdots \,,\, b_{\,n}\,\right)\,\right| \,=\, \,|\,\alpha\,| \hspace{4cm}\]
\[\,=\, \dfrac{|\,\alpha\,|\,\left\|\,x,\, b_{\,2},\, \cdots,\, b_{\,n}\,\right\|}{\left\|\,x,\, b_{\,2},\, \cdots,\, b_{\,n}\,\right\|} =\, \dfrac{|\,\alpha\,|\,\left\|\,x, b_{\,2},\, \cdots,\, b_{\,n}\,\right\|}{\left\|\,y \,+\, \alpha\,x_{\,0},\, b_{\,2},\, \cdots,\, b_{\,n}\,\right\|} \,=\,  \dfrac{|\,\alpha\,|\,\left\|\,x,\, b_{\,2},\, \cdots,\, b_{\,n}\,\right\|}{|\,\alpha\,|\,\left\|\,\dfrac{y}{\alpha} \,+\, x_{\,0},\, b_{\,2},\, \cdots,\, b_{\,n}\,\right\|}\]
\[ \,=\, \dfrac{\left\|\,x,\, b_{\,2},\, \cdots,\, b_{\,n}\,\right\|}{\left\|\,x_{\,0} \,-\, \left(\,-\, \dfrac{y}{\alpha}\,\right) \,,\, b_{\,2} \,,\, \cdots \,,\, b_{\,n}\,\right\|}\,\leq\, \dfrac{\left\|\,x,\, b_{\,2},\, \cdots,\, b_{\,n}\,\right\|}{d}\; \;\left[\;\text{since}\; \,-\, \dfrac{y}{\alpha} \,\in\, W\right].\hspace{1cm}\]This shows that \,$T_{\,1}$\, is a bounded \,$b$-linear functional with \,$\|\,T_{\,1}\,\| \,\leq\, \dfrac{1}{d}$.\;To prove \,$\|\,T_{\,1}\,\| \,\geq\, \dfrac{1}{d}$, we consider a sequence \,$\left\{\,x_{\,k}\,\right\},\, x_{\,k} \,\in\, W$\, such that 
\[\lim\limits_{k \,\to\, \infty}\,\left\|\,x_{\,0} \,-\, x_{\,k} \,,\, b_{\,2} \,,\, \cdots \,,\, b_{\,n}\,\right\| \,=\, d.\]
\[\text{Now},\hspace{.3cm} 1 \,=\, \left|\,T_{\,1}\,\left(\,x_{\,0} \,,\, b_{\,2} \,,\, \cdots \,,\, b_{\,n}\,\right) \,-\, T_{\,1}\,\left(\,x_{\,k} \,,\, b_{\,2} \,,\, \cdots \,,\, b_{\,n}\,\right)\,\right|\hspace{2cm}\]
\[\hspace{1cm} \,=\, \left|\,T_{\,1}\,\left(\,x_{\,0} \,-\, x_{\,k} \,,\, b_{\,2} \,,\, \cdots \,,\, b_{\,n}\,\right) \,\right| \,\leq\, \|\,T_{\,1}\,\|\, \left\|\,x_{\,0} \,-\, x_{\,k} \,,\, b_{\,2} \,,\, \cdots \,,\, b_{\,n}\,\right\|\]
\[\Rightarrow\, 1 \,\leq\, \|\,T_{\,1}\,\|\, \lim\limits_{k \,\to\, \infty}\,\left\|\,x_{\,0} \,-\, x_{\,k} \,,\, b_{\,2} \,,\, \cdots \,,\, b_{\,n}\,\right\| \,=\, \|\,T_{\,1}\,\|\; d \,\Rightarrow\, \|\,T_{\,1}\,\| \,\geq\, \dfrac{1}{d}\;.\]Thus, we have established that \,$\exists$\, a bounded \,$b$-linear functional \,$T_{\,1}$\, on \,$W_{0} \,\times\, \left<\,b_{\,2}\,\right> \,\times \cdots \,\times\, \left<\,b_{\,n}\,\right>$\, such that 
\[T_{\,1}\,\left(\,x \,,\, b_{\,2} \,,\, \cdots \,,\, b_{\,n}\,\right) \,=\, 0\; \;\;\forall\; x \,\in\, W,\; \;T_{\,1}\,\left(\,x_{\,0} \,,\, b_{\,2} \,,\, \cdots \,,\, b_{\,n}\,\right) \,=\, 1\; \;\text{and}\; \,\|\,T_{\,1}\,\| \,=\, \dfrac{1}{d}.\]
Applying the Theorem (\ref{th1.1}), we obtain a \,$b$-linear functional \,$T \,\in\, X^{\,\ast}_{F}$\, such that 
\[T\,\left(\,x \,,\, b_{\,2} \,,\, \cdots \,,\, b_{\,n}\,\right) \,=\, T_{\,1}\,\left(\,x \,,\, b_{\,2} \,,\, \cdots \,,\, b_{\,n}\,\right)\; \;\forall\; x \,\in\, W_{0}\; \;\text{and}\; \,\|\,T\,\| \,=\, \|\,T_{\,1}\,\| \,=\, \dfrac{1}{d}.\]
\[\text{So},\hspace{.3cm} T\,\left(\,x \,,\, b_{\,2} \,,\, \cdots \,,\, b_{\,n}\,\right) \,=\, T_{\,1}\,\left(\,x \,,\, b_{\,2} \,,\, \cdots \,,\, b_{\,n}\,\right) \,=\, 0\; \;\;\forall\; x \,\in\, W\; \;\text{and}\] 
\[T\,\left(\,x_{\,0} \,,\, b_{\,2} \,,\, \cdots \,,\, b_{\,n}\,\right) \,=\, T_{\,1}\,\left(\,x_{\,0} \,,\, b_{\,2} \,,\, \cdots \,,\, b_{\,n}\,\right) \,=\, 1.\]Hence, the proof of the theorem is complete.
\end{proof}

\begin{remark}
The Theorem (\ref{th3.1}) is a generalization of the Theorem (\ref{th1.2}) and its derivation is as follows:\\

Consider \,$W \,=\, \{\,0\,\}$\, and \,$d \,=\, \left\|\,x_{\,0} \,,\, b_{\,2} \,,\, \cdots \,,\, b_{\,n}\,\right\|$, then by the Theorem (\ref{th3.1}), there exists a bounded $b$-linear functional \,$T_{\,0} \,\in\, X^{\,\ast}_{F}$\, such that
\[\left\|\,T_{\,0}\,\right\| \,=\, \dfrac{1}{d} \,=\, \dfrac{1}{\left\|\,x_{\,0} \,,\, b_{\,2} \,,\, \cdots \,,\, b_{\,n}\,\right\|}\; \;\text{and}\; \;T_{\,0}\,(\,x_{\,0} \,,\, b_{\,2} \,,\, \cdots \,,\, b_{\,n}\,) \,=\, 1.\]
Now, for all \,$x \,\in\, X$, we define  
\[T\,(\,x \,,\, b_{\,2} \,,\, \cdots \,,\, b_{\,n}\,) \,=\, \left\|\,x_{\,0} \,,\, b_{\,2} \,,\, \cdots \,,\, b_{\,n}\,\right\| \,\cdot\, T_{\,0}\,(\,x \,,\, b_{\,2} \,,\, \cdots \,,\, b_{\,n}\,),\;\text{then}\] 
\[T\,(\,x_{\,0} \,,\, b_{\,2} \,,\, \cdots \,,\, b_{\,n}\,) \,=\, \left\|\,x_{\,0} \,,\, b_{\,2} \,,\, \cdots \,,\, b_{\,n}\,\right\|\;T_{\,0}\,(\,x_{\,0} \,,\, b_{\,2} \,,\, \cdots \,,\, b_{\,n}\,) \,=\, \left\|\,x_{\,0} \,,\, b_{\,2} \,,\, \cdots \,,\, b_{\,n}\,\right\|\]
\[\text{Also},\;\|\,T\,\| \,=\, \sup \left \{\,\dfrac{|\,T\,(\,x \,,\, b_{\,2} \,,\, \cdots \,,\, b_{\,n}\,)\,|}{\|\,x \,,\, b_{\,2} \,,\, \cdots \,,\, b_{\,n}\,\|} \,:\, \|\,x \,,\, b_{\,2} \,,\, \cdots \,,\, b_{\,n}\,\| \,\neq\, 0\,\right \}\]
\[ \,=\, \sup \left \{\,\dfrac{\left|\,\left\|\,x_{\,0} \,,\, b_{\,2} \,,\, \cdots \,,\, b_{\,n}\,\right\|\;T_{\,0}\,(\,x \,,\, b_{\,2} \,,\, \cdots \,,\, b_{\,n}\,)\,\right|}{\|\,x \,,\, b_{\,2} \,,\, \cdots \,,\, b_{\,n}\,\|} \,:\, \|\,x \,,\, b_{\,2} \,,\, \cdots \,,\, b_{\,n}\,\| \,\neq\, 0\,\right \}\]
\[\,=\, \left\|\,x_{\,0} \,,\, b_{\,2} \,,\, \cdots \,,\, b_{\,n}\,\right\|\;\sup \left \{\,\dfrac{|\,T_{\,0}\,(\,x \,,\, b_{\,2} \,,\, \cdots \,,\, b_{\,n}\,)\,|}{\|\,x \,,\, b_{\,2} \,,\, \cdots \,,\, b_{\,n}\,\|} \,:\, \|\,x \,,\, b_{\,2} \,,\, \cdots \,,\, b_{\,n}\,\| \,\neq\, 0\,\right \}\]
\[ \,=\, \left\|\,x_{\,0} \,,\, b_{\,2} \,,\, \cdots \,,\, b_{\,n}\,\right\|\;\|\,T_{\,0}\,\| \,=\, 1.\hspace{7.4cm}\]
\end{remark}

\begin{corollary}
Let \,$X$\; be a linear n-normed space over the field \,$\mathbb{R}$\, and \,$W$\, be a subspace of \,$X$\; and let \,$x_{\,0} \,\in\, X$\; such that \,$x_{\,0},\, b_{\,2},\, \cdots,\, b_{\,n}$\, are linearly independent and suppose \,$d \,=\, \inf\limits_{x \,\in\, W}\,\left\|\,x_{\,0} \,-\, x \,,\, b_{\,2} \,,\, \cdots \,,\, b_{\,n}\,\right\| \,>\, 0$.\;Then
\begin{itemize}
\item[(I)]\hspace{.1cm} $T\,\left(\,x_{\,0} \,,\, b_{\,2} \,,\, \cdots \,,\, b_{\,n}\,\right) \,=\, d$,  
\item[(II)] \hspace{.1cm}$T\,\left(\,x \,,\, b_{\,2} \,,\, \cdots \,,\, b_{\,n}\,\right) \,=\, 0\; \;\;\forall\; x \,\in\, W\;\; \;\&\;\;  \;\|\,T\,\| \,=\, 1$, for some \,$T \,\in\, X^{\,\ast}_{F}$.
\end{itemize}
\end{corollary}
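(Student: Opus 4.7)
The statement is a rescaled version of Theorem \ref{th3.1}, so the plan is to apply that theorem and renormalize. First I would invoke Theorem \ref{th3.1} to obtain a bounded $b$-linear functional $T_{\,1} \,\in\, X^{\,\ast}_{F}$ satisfying
\[
T_{\,1}\,(\,x_{\,0} \,,\, b_{\,2} \,,\, \cdots \,,\, b_{\,n}\,) \,=\, 1, \qquad T_{\,1}\,(\,x \,,\, b_{\,2} \,,\, \cdots \,,\, b_{\,n}\,) \,=\, 0 \;\;\forall\; x \,\in\, W, \qquad \|\,T_{\,1}\,\| \,=\, \dfrac{1}{d}.
\]
The linear independence of $x_{\,0},\, b_{\,2},\, \cdots,\, b_{\,n}$ and the hypothesis $d \,>\, 0$ are exactly the conditions needed to invoke Theorem \ref{th3.1}.

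Next I would define $T \,=\, d \,\cdot\, T_{\,1}$, i.e.,
\[
T\,(\,x \,,\, b_{\,2} \,,\, \cdots \,,\, b_{\,n}\,) \,=\, d\; T_{\,1}\,(\,x \,,\, b_{\,2} \,,\, \cdots \,,\, b_{\,n}\,) \;\;\forall\; x \,\in\, X.
\]
Since scalar multiples of bounded $b$-linear functionals are bounded $b$-linear functionals (by axioms (I), (II) of $b$-linearity and the definition of $\|\,\cdot\,\|$), $T \,\in\, X^{\,\ast}_{F}$.

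Finally I would verify the three conclusions: (i) $T\,(\,x_{\,0} \,,\, b_{\,2} \,,\, \cdots \,,\, b_{\,n}\,) \,=\, d \,\cdot\, 1 \,=\, d$; (ii) for every $x \,\in\, W$, $T\,(\,x \,,\, b_{\,2} \,,\, \cdots \,,\, b_{\,n}\,) \,=\, d \,\cdot\, 0 \,=\, 0$; and (iii) $\|\,T\,\| \,=\, |\,d\,|\,\|\,T_{\,1}\,\| \,=\, d \,\cdot\, \tfrac{1}{d} \,=\, 1$, where the homogeneity of the norm follows directly from formula (III) in the definition of the norm of a bounded $b$-linear functional. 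There is no substantive obstacle here; the corollary is a one-line rescaling of Theorem \ref{th3.1}, and the only thing to check is the routine homogeneity $\|\,\alpha\,T_{\,1}\,\| \,=\, |\,\alpha\,|\,\|\,T_{\,1}\,\|$, which is immediate from the supremum characterization.
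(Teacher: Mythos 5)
Your proposal is correct and is essentially identical to the paper's own proof: both invoke Theorem (\ref{th3.1}) to obtain $T_{\,1}$ with $T_{\,1}(x_{\,0}, b_{\,2}, \cdots, b_{\,n}) = 1$, vanishing on $W$, and $\|T_{\,1}\| = 1/d$, and then rescale by setting $T = d\,T_{\,1}$. The verification of the three conclusions, including the homogeneity $\|d\,T_{\,1}\| = d\,\|T_{\,1}\| = 1$, matches the paper's argument step for step.
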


\begin{proof}
By Theorem (\ref{th3.1}), \,$\exists$\, \,$T_{\,1} \,\in\, X^{\,\ast}_{F}$\, such that
\[T_{\,1}\,\left(\,x_{\,0} \,,\, b_{\,2} \,,\, \cdots \,,\, b_{\,n}\,\right) \,=\, 1, \;T_{\,1}\,\left(\,x \,,\, b_{\,2} \,,\, \cdots \,,\, b_{\,n}\,\right) \,=\, 0\; \;\;\forall\; x \,\in\, W\; \;\text{and}\; \,\|\,T_{\,1}\,\| \,=\, \dfrac{1}{d}.\]
Define the bounded \,$b$-linear functional \,$T$\, on \,$X \,\times\, \left<\,b_{\,2}\,\right> \,\times \cdots \,\times\, \left<\,b_{\,n}\,\right>$\, by \,$T \,=\, d\; T_{\,1}$.\;Then
\,$T\,\left(\,x_{\,0} \,,\,  b_{\,2} \,,\, \cdots \,,\, b_{\,n}\,\right) \,=\, d\; T_{\,1}\,\left(\,x_{\,0} \,,\,  b_{\,2} \,,\, \cdots \,,\, b_{\,n}\,\right) \,=\, d$,
\[ T\,\left(\,x \,,\,  b_{\,2} \,,\, \cdots \,,\, b_{\,n}\,\right) \,=\, d\; T_{\,1}\,\left(\,x \,,\,  b_{\,2} \,,\, \cdots \,,\, b_{\,n}\,\right) \,=\, 0\; \;\;\forall\; x \,\in\, W\]with \,$\|\,T\,\| \,=\, d\; \|\,T_{\,1}\,\| \,=\, \dfrac{d}{d} \,=\, 1$.\;This completes the proof.
\end{proof}

\begin{corollary}\label{cor3}
Let \,$X$\; be a linear n-normed space over the field \,$\mathbb{R}$\, and \,$W$\, be a closed linear subspace of \,$X$\, and let \,$x_{\,0} \,\in\, X \,-\, W$\, such that \,$x_{\,0},\, b_{\,2},\, \cdots,\, b_{\,n}$\, are linearly independent and suppose \,$d \,=\, \inf\limits_{x \,\in\, W}\,\left\|\,x_{\,0} \,-\, x \,,\, b_{\,2} \,,\, \cdots \,,\, b_{\,n}\,\right\|$.\;Then \,$\exists$\, \,$T \,\in\, X^{\,\ast}_{F}$\, such that
\begin{itemize}
\item[(I)]\hspace{.1cm} $T\,\left(\,x_{\,0} \,,\, b_{\,2} \,,\, \cdots \,,\, b_{\,n}\,\right) \,=\, 1$,  
\item[(II)]\hspace{.1cm} $T\,\left(\,x \,,\, b_{\,2} \,,\, \cdots \,,\, b_{\,n}\,\right) \,=\, 0\; \;\;\forall\; x \,\in\, W\; \;\text{and}\; \;\|\,T\,\| \,=\, \dfrac{1}{d}$.
\end{itemize}
\end{corollary}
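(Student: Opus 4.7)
The plan is to reduce this corollary directly to Theorem~\ref{th3.1}. The hypotheses of the two statements match exactly except that Theorem~\ref{th3.1} demands the quantity $d$ be positive outright, whereas here we must derive $d>0$ from the weaker assumption that $W$ is closed and $x_{0}\in X-W$. Once $d>0$ is in hand, the three conclusions (I), (II), and $\|T\|=1/d$ follow by a single invocation of Theorem~\ref{th3.1}; no further computation is required.

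First I would verify that $d>0$. Suppose for contradiction that $d=0$. Then by definition of infimum there exists a sequence $\{x_{k}\}\subseteq W$ with
\[
\left\|\,x_{\,0} \,-\, x_{\,k} \,,\, b_{\,2} \,,\, \cdots \,,\, b_{\,n}\,\right\| \,\to\, 0 \;\;\text{as}\;\; k \,\to\, \infty.
\]
This sequence is the natural candidate for a sequence in $W$ witnessing $x_{0}\in\overline{W}$; combined with the closedness hypothesis $\overline{W}=W$, this would force $x_{0}\in W$, contradicting the hypothesis $x_{0}\in X-W$.

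Having established $d>0$, the hypotheses of Theorem~\ref{th3.1} are fully satisfied (linear independence of $x_{0},b_{2},\ldots,b_{n}$ is also assumed). Quoting that theorem produces a bounded $b$-linear functional $T\in X^{\ast}_{F}$ with $T\,(\,x_{\,0} \,,\, b_{\,2} \,,\, \cdots \,,\, b_{\,n}\,)=1$, with $T\,(\,x \,,\, b_{\,2} \,,\, \cdots \,,\, b_{\,n}\,)=0$ for every $x\in W$, and with $\|T\|=1/d$, completing the proof.

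The delicate point in the first step is that the definition of closure employed in this paper is built on the strong $n$-norm convergence $\|\,x_{\,k}\,-\,x_{\,0}\,,\,y_{\,2}\,,\,\cdots\,,\,y_{\,n}\,\|\to 0$ for every choice of $y_{\,2},\ldots,y_{\,n}\in X$, while the datum $d=0$ supplies convergence only for the fixed tuple $(b_{\,2},\ldots,b_{\,n})$. I expect the main obstacle to be bridging this gap cleanly: one must either appeal to the convention, implicit throughout the $b$-linear framework, that convergence is measured with the distinguished $b_{\,i}$'s, or refine the approximating sequence using linear independence of $\{x_{\,0},b_{\,2},\ldots,b_{\,n}\}$ to extract honest $n$-norm convergence and then apply closedness of $W$ to reach the contradiction.
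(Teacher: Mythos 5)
Your proposal follows the paper's proof exactly: establish $d>0$ from the closedness of $W$ via the equivalence $d=0 \iff x_{0}\in\overline{W}$, and then invoke Theorem~\ref{th3.1} in a single step. The subtlety you flag --- that $d=0$ only yields convergence measured against the fixed tuple $(b_{2},\ldots,b_{n})$, whereas the paper's definition of closure demands $n$-norm convergence against every tuple --- is a genuine one, but the paper itself dismisses it with ``it can be easily verified,'' so your treatment is no less complete than the original.
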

\begin{proof}
It can be easily verified that \,$\inf\limits_{x \,\in\, W}\,\left\|\,x_{\,0} \,-\, x \,,\, b_{\,2} \,,\, \cdots \,,\, b_{\,n}\,\right\| \,=\, 0$\, if and only if \,$x_{\,0} \,\in\, \overline{\,W}$.\;But \,$W \,=\, \overline{\,W}$\, and it follows that \,$x_{\,0} \,\not\in\, \overline{\,W}$.\;Hence
\[d \,=\, \inf\limits_{x \,\in\, W}\,\left\|\,x_{\,0} \,-\, x \,,\, b_{\,2} \,,\, \cdots \,,\, b_{\,n}\,\right\| \,>\, 0.\]Now, the proof of this corollary follows from Theorem (\ref{th3.1}). 
\end{proof}

\begin{corollary}\label{cor4}
Let \,$X$\; be a linear n-normed space over the field \,$\mathbb{R}$\, and \,$W$\, be a closed linear subspace of \,$X$\, and let \,$x_{\,0} \,\in\, X \,-\, W$\, such that \,$x_{\,0},\, b_{\,2},\, \cdots,\, b_{\,n}$\, are linearly independent.\;Then \,$\exists$\, \,$T \,\in\, X^{\,\ast}_{F}$\, such that
\[T\,\left(\,x_{\,0} \,,\, b_{\,2} \,,\, \cdots \,,\, b_{\,n}\,\right) \,\neq\, 0\; \;\text{and}\; \;T\,\left(\,x \,,\, b_{\,2} \,,\, \cdots \,,\, b_{\,n}\,\right) \,=\, 0\; \;\;\forall\; x \,\in\, W.\]
\end{corollary}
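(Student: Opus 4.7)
The plan is to derive this corollary as a direct specialization of Corollary \ref{cor3}, since the present statement is strictly weaker: it only asks for a separating functional, dropping the normalization \,$T(x_{\,0},\,b_{\,2},\,\cdots,\,b_{\,n}) = 1$\, and the norm computation \,$\|T\| = 1/d$.

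First I would set \,$d \,=\, \inf\limits_{x \,\in\, W}\,\left\|\,x_{\,0} \,-\, x \,,\, b_{\,2} \,,\, \cdots \,,\, b_{\,n}\,\right\|$\, and verify that \,$d \,>\, 0$.\;This is exactly the preliminary observation already used inside the proof of Corollary \ref{cor3}: the infimum vanishes iff \,$x_{\,0} \,\in\, \overline{W}$, but \,$W$\, is closed and \,$x_{\,0} \,\notin\, W \,=\, \overline{W}$, so \,$d \,>\, 0$.\;Combined with the linear independence hypothesis on \,$\{x_{\,0},\,b_{\,2},\,\cdots,\,b_{\,n}\}$, this places us in the setting where Corollary \ref{cor3} applies.

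Next I would invoke Corollary \ref{cor3} directly to obtain a bounded \,$b$-linear functional \,$T \,\in\, X^{\,\ast}_{F}$\, satisfying \,$T(x_{\,0}, b_{\,2}, \cdots, b_{\,n}) \,=\, 1$\, and \,$T(x, b_{\,2}, \cdots, b_{\,n}) \,=\, 0$\, for every \,$x \,\in\, W$.\;Since \,$1 \,\neq\, 0$, this \,$T$\, meets the weaker conclusion of the present corollary.

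There is essentially no obstacle: the work has been done inside Corollary \ref{cor3} (which in turn rests on Theorem \ref{th3.1} and the Hahn-Banach extension Theorem \ref{th1.1}).\;The only thing to highlight is the closure hypothesis on \,$W$, which is what guarantees \,$d \,>\, 0$\, and thereby lets us apply Theorem \ref{th3.1} via Corollary \ref{cor3}; without closedness the infimum could be zero and the construction would collapse.
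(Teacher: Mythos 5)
Your proposal is correct and matches the paper's own argument, which likewise obtains the result as an immediate consequence of Corollary~(\ref{cor3}) (noting $d>0$ from closedness and that $T(x_{\,0},b_{\,2},\cdots,b_{\,n})=1\neq 0$). Nothing further is needed.
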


\begin{proof}
Proof of this corollary directly follows from that of the corollary (\ref{cor3})\,.
\end{proof}

The Hahn-Banach Theorem for bounded\;$b$-linear functional and its consequences can be used to revel much among the properties of linear\;$n$-normed space and its dual space.\;Next theorem relates separability of the dual space to the separability of its original space.

\begin{theorem}
Let \,$X$\, be a linear n-normed space over the field \,$\mathbb{R}$\, and \,$X^{\,\ast}_{F}$\, be the Banach space of all bounded b-linear functionals defined on \,$X \,\times\, \left<\,b_{\,2}\,\right> \,\times \cdots \,\times\, \left<\,b_{\,n}\,\right>$.\;Then the space \,$X$\, is separable if \,$X^{\,\ast}_{F}$\; is separable.
\end{theorem}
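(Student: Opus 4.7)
The plan is to adapt the classical Banach-space argument (dual separable implies space separable) to the $n$-normed setting, with the separating-hyperplane step replaced by Corollary (\ref{cor4}).

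First I would fix a countable dense set $\{T_k\}_{k \in \mathbb{N}}$ in $X^{\,\ast}_F$ and, for each $k$, use the definition of $\|T_k\|$ as a supremum to choose a vector $x_k \in X$ with $\left\|\,x_k \,,\, b_{\,2} \,,\, \cdots \,,\, b_{\,n}\,\right\| \,\leq\, 1$ and $\left|\,T_k\,(\,x_k \,,\, b_{\,2} \,,\, \cdots \,,\, b_{\,n}\,)\,\right| \,\geq\, \tfrac{1}{2}\,\|T_k\|$. Let $W$ be the closure of the $\mathbb{Q}$-linear span of $\{x_k\}_{k \in \mathbb{N}} \cup \{b_2,\, \cdots,\, b_n\}$. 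Then $W$ is a closed linear subspace of $X$ and, by construction, it is separable (a countable rational-coefficient linear combination is dense in it).

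Next I would argue $W \,=\, X$ by contradiction. Suppose $x_{\,0} \,\in\, X \,-\, W$. Since the $b_i$'s are assumed linearly independent (otherwise the $n$-norm is trivial) and $W \,\supseteq\, \langle b_2,\, \cdots,\, b_n \rangle$, any nontrivial relation $\alpha\,x_{\,0} \,+\, \sum \beta_i\,b_i \,=\, 0$ would force $x_{\,0} \,\in\, W$; hence $x_{\,0},\, b_{\,2},\, \cdots,\, b_{\,n}$ are linearly independent. Corollary (\ref{cor3}) then furnishes $T \,\in\, X^{\,\ast}_F$ with $T\,(\,x \,,\, b_{\,2} \,,\, \cdots \,,\, b_{\,n}\,) \,=\, 0$ for all $x \,\in\, W$ and $\|T\| \,=\, 1/d \,\neq\, 0$; after rescaling I may assume $\|T\| \,=\, 1$.

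By density of $\{T_k\}$ in $X^{\,\ast}_F$, I can pick $T_k$ with $\|T_k \,-\, T\|$ arbitrarily small. Since $x_k \,\in\, W$, we have $T\,(\,x_k \,,\, b_{\,2} \,,\, \cdots \,,\, b_{\,n}\,) \,=\, 0$, and therefore
\[
\tfrac{1}{2}\,\|T_k\| \,\leq\, \left|\,T_k\,(\,x_k \,,\, b_{\,2} \,,\, \cdots \,,\, b_{\,n}\,)\,\right| \,=\, \left|\,(T_k \,-\, T)\,(\,x_k \,,\, b_{\,2} \,,\, \cdots \,,\, b_{\,n}\,)\,\right| \,\leq\, \|T_k \,-\, T\|,
\]
so $\|T\| \,\leq\, \|T_k\| \,+\, \|T_k \,-\, T\| \,\leq\, 3\,\|T_k \,-\, T\|$, which can be made smaller than $1$. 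This contradicts $\|T\| \,=\, 1$, forcing $W \,=\, X$ and giving separability of $X$.

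The main obstacle I expect is the linear-independence side condition required by Corollary (\ref{cor3}): I have to guarantee that the escaped element $x_{\,0}$ forms a linearly independent system with $b_{\,2},\, \cdots,\, b_{\,n}$. Absorbing $\{b_2,\, \cdots,\, b_n\}$ into the spanning set for $W$ at the outset is the clean way to arrange this, and is the small but essential twist over the classical normed-space proof. Everything else is a straightforward transcription of the standard argument into the bounded $b$-linear functional language developed in Section 2.
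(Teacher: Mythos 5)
Your proof is correct and follows essentially the same route as the paper's: pick near-norming vectors $x_{\,k}$ for a countable dense set $\{T_k\}$ in $X^{\,\ast}_{F}$, let $W$ be the closed span of the $x_{\,k}$, and use Corollary (\ref{cor4}) together with the estimate $\|\,T\,\| \,\leq\, 3\,\|\,T_k \,-\, T\,\|$ to rule out $W \,\neq\, X$. Your one refinement --- adjoining $b_{\,2},\, \cdots,\, b_{\,n}$ to the spanning set so that any $x_{\,0} \,\notin\, W$ is automatically linearly independent of $b_{\,2},\, \cdots,\, b_{\,n}$ --- actually patches a small gap, since the paper merely posits such an $x_{\,0}$ without justifying its existence.
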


\begin{proof}
Since \,$X^{\,\ast}_{F}$\; is separable, \,$\exists$\, a countable set \,$S \,=\, \left\{\;T_{k} \,\in\, X^{\,\ast}_{F} \,:\, k \,\in\, \mathbb{N}\;\right\}$\; such that \,$S$\; is dense in \,$X^{\,\ast}_{F}$, i.\,e., \,$\overline{S} \,=\, X^{\,\ast}_{F}$.\;For each \,$k \,\in\, \mathbb{N}$, choose \,$x_{\,k} \,\in\, X$\; such that \,$\left\|\,x_{\,k} \,,\, b_{\,2} \,,\, \cdots \,,\, b_{\,n}\,\right\| \,=\, 1$\, and \,$\left|\,T_{k} \left(\,x_{\,k} \,,\, b_{\,2} \,,\, \cdots \,,\, b_{\,n}\,\right)\,\right| \,\geq\, \dfrac{1}{2}\, \left\|\,T_{k}\,\right\|$.\;Let \,$W$\; be the closed subspace of \,$X$\; generated by the sequence \,$\left\{\,x_{\,k}\,\right\}_{k \,=\, 1}^{\,\infty}$, i.\,e., \,$W \,=\, \overline{\,span}\,\left\{\,x_{\,k} \,\in\, X \,:\, k \,\in\, \mathbb{N}\,\right\}$.\;Suppose \,$W \,\neq\, X$.\;Let \,$x_{\,0} \,\in\, X \,-\, W$\, such that \,$x_{\,0},\, b_{\,2},\,\\ \cdots,\, b_{\,n}$\, are linearly independent.\;By Corollary (\ref{cor4}), \,$\exists$\, \,$0 \,\neq\, T \,\in\, X^{\,\ast}_{F}$\; such that 
\[T\,\left(\,x_{\,0} \,,\, b_{\,2} \,,\, \cdots \,,\, b_{\,n}\,\right) \,\neq\, 0\; \;\text{and}\; \;T\,\left(\,x \,,\, b_{\,2} \,,\, \cdots \,,\, b_{\,n}\,\right) \,=\, 0\; \;\;\forall\; x \,\in\, W.\]Since \,$\left\{\,x_{\,k}\,\right\}_{k \,=\, 1}^{\,\infty} \,\subseteq\, W$, \,$T\,\left(\,x_{\,k} \,,\, b_{\,2} \,,\, \cdots \,,\, b_{\,n}\,\right) \,=\, 0,\; k \,\in\, \mathbb{N}$.\;Thus 
\[ \dfrac{1}{2}\, \left\|\,T_{k}\,\right\| \,\leq\, \left|\,T_{k} \left(\,x_{\,k} \,,\, b_{\,2} \,,\, \cdots \,,\, b_{\,n}\,\right)\,\right| \,=\, \left|\,T_{k} \left(\,x_{\,k} \,,\, b_{\,2} \,,\, \cdots \,,\, b_{\,n}\,\right) \,-\, T\,\left(\,x_{\,k} \,,\, b_{\,2} \,,\, \cdots \,,\, b_{\,n}\,\right)\,\right|\]
\[\hspace{3.5cm} \,\leq\, \left\|\,T_{k} \,-\, T\,\right\|\,\left\|\,x_{\,k} \,,\, b_{\,2} \,,\, \cdots \,,\, b_{\,n}\,\right\|\]
\[\hspace{5.9cm}=\, \left\|\,T_{k} \,-\, T\,\right\| \;[\;\text{since}\; \left\|\,x_{\,k} \,,\, b_{\,2} \,,\, \cdots \,,\, b_{\,n}\,\right\| \,=\, 1\;].\]
Again, since \,$\overline{\,S} \,=\, X^{\,\ast}_{F}$, for each \,$T \,\in\, X^{\,\ast}_{F}$, \,$\exists\, \;\text{a sequence}\; \,\left\{\,T_{k}\,\right\}$\, in \,$S$\, such that \,$\lim\limits_{k \,\to\, \infty}\,T_{k} \,=\, T$.\;Therefore,
\[\|\,T\,\| \,\leq\, \left\|\,T_{k} \,-\, T\,\right\| \,+\, \left\|\,T_{k}\,\right\| \,\leq\, 3\,\left\|\,T_{k} \,-\, T\,\right\|\; \;\forall\, k \,\in\, \mathbb{N}.\]Taking limit on both sides as \,$k \,\to\, \infty$, it follows that \,$T \,=\, 0$, which contradicts the assumption that \,$W \,\neq\, X$.\;Hence, \,$W \,=\, X$\, and thus \,$X$\, is separable.    
\end{proof}

\section{Reflexivity of linear $n$-normed space}

\smallskip\hspace{.6 cm} Recall that given  a linear\;$n$-normed space \,$X \,\neq\, \{\,0\,\}$, the dual space \,$X_{F}^{\,\ast}$\, is a normed space with respect to the norm \,$\|\,\cdot\,\| \,:\, X_{F}^{\,\ast} \,\to\, \mathbb{R}$\, defined by 
\[ \|\,T\,\| \,=\, \sup\,\left\{\,\left|\,T\,(\,x \,,\, b_{\,2} \,,\, \cdots \,,\, b_{\,n}\,)\,\right| \,:\, x \,\in\, X,\, \left\|\,x  \,,\, b_{\,2} \,,\, \cdots \,,\, b_{\,n}\,\right\| \,=\, 1\,\right\}.\]
Furthermore,  \,$X_{F}^{\,\ast}$\, is a Banach space.\;Also, by Corollary (\ref{cor1}), \,$X_{F}^{\,\ast} \,\neq\, \{\,O\,\}$\, and, therefore, as a normed space \,$X_{F}^{\,\ast}$\, has its own dual space \,$\left(\,X_{F}^{\,\ast}\,\right)^{\,\ast}$, denoted by \,$X_{F}^{\,\ast\,\ast}$\, and is called the second dual space of \,$X$, which is again a Banach space under the norm
\[\|\,\varphi\,\| \,=\, \sup\,\left\{\,\left|\,\varphi\,(\,T\,)\,\right|\, \;:\; T \,\in\, X_{F}^{\,\ast} \,,\, \|\,T\,\| \,\leq\, 1\,\right\} \,,\, \varphi \,\in\, X_{F}^{\,\ast\,\ast}.\]

\begin{theorem}
Let \,$X$\, be a real linear n-normed space.\;Given \,$x \,\in\, X$, let 
\begin{equation}\label{eq1}
\varphi_{(\,x \,,\, F\,)}\,(\,T\,) \,=\, T\,(\,x \,,\, b_{\,2} \,,\, \cdots \,,\, b_{\,n}\,)\;\; \;\forall\; T \,\in\, X_{F}^{\,\ast}.
\end{equation}
Then \,$\varphi_{(\,x \,,\, F\,)}$\, is a bounded linear functional on \,$X_{F}^{\,\ast}$.\;Furthermore, the mapping \\$\left(\,x \,,\, b_{\,2} \,,\, \cdots \,,\, b_{\,n}\,\right) \,\to\, \varphi_{(\,x \,,\, F\,)}$\; is an isometric isomorphism of  \,$X \,\times\, \left<\,b_{\,2}\,\right> \,\times \cdots \,\times\, \left<\,b_{\,n}\,\right>$\; onto the subspace \,$\left\{\,\varphi_{(\,x \,,\, F\,)} \,:\, (\,x \,,\, b_{\,2} \,,\, \cdots \,,\, b_{\,n}\,) \,\in\, X \,\times\, \left<\,b_{\,2}\,\right> \,\times \cdots \,\times\, \left<\,b_{\,n}\,\right>\,\right\}$\; of \,$X_{F}^{\,\ast\,\ast}$.    
\end{theorem}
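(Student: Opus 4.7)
The plan is to verify in sequence: (i) for each fixed \,$x \in X$, the functional \,$\varphi_{(x,F)}$\, defined by \eqref{eq1} is a bounded linear functional on the Banach space \,$X_{F}^{\,\ast}$; (ii) the assignment \,$x \,\mapsto\, \varphi_{(x,F)}$\, is linear in \,$x$\, (with \,$b_{\,2},\,\cdots,\,b_{\,n}$\, fixed); and (iii) this assignment preserves the \,$n$-norm \,$\left\|\,x \,,\, b_{\,2} \,,\, \cdots \,,\, b_{\,n}\,\right\|$, which delivers the asserted isometric isomorphism onto the stated subspace of \,$X_{F}^{\,\ast\,\ast}$.

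For (i), linearity of \,$\varphi_{(x,F)}$\, in the variable \,$T$\, is immediate, since the vector-space operations on \,$X_{F}^{\,\ast}$\, are pointwise: for \,$T,\,S \,\in\, X_{F}^{\,\ast}$\, and \,$\alpha \,\in\, \mathbb{R}$, we have \,$(\alpha\,T + S)(x, b_{\,2}, \cdots, b_{\,n}) \,=\, \alpha\,T(x, b_{\,2}, \cdots, b_{\,n}) \,+\, S(x, b_{\,2}, \cdots, b_{\,n})$. Boundedness follows from the defining inequality \,$\left|\,T(x, b_{\,2}, \cdots, b_{\,n})\,\right| \,\leq\, \|T\|\,\left\|\,x, b_{\,2}, \cdots, b_{\,n}\,\right\|$, which simultaneously gives the estimate \,$\|\varphi_{(x,F)}\| \,\leq\, \left\|\,x, b_{\,2}, \cdots, b_{\,n}\,\right\|$. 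For (ii), the \,$b$-linearity of every \,$T$\, in its first slot yields, pointwise in \,$T$, the identity \,$\varphi_{(\alpha x + y, F)}(T) \,=\, \alpha\,\varphi_{(x,F)}(T) \,+\, \varphi_{(y,F)}(T)$.

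The substantive step is (iii), namely the reverse inequality \,$\|\varphi_{(x,F)}\| \,\geq\, \left\|\,x, b_{\,2}, \cdots, b_{\,n}\,\right\|$. Here I would invoke Theorem \ref{th1.3}, which asserts
\[\left\|\,x \,,\, b_{\,2} \,,\, \cdots \,,\, b_{\,n}\,\right\| \,=\, \sup\left\{\,\dfrac{\left|\,T(x, b_{\,2}, \cdots, b_{\,n})\,\right|}{\|T\|} \,:\, T \,\in\, X_{F}^{\,\ast},\; T \,\neq\, 0\,\right\}.\]
Since \,$\left|\,T(x, b_{\,2}, \cdots, b_{\,n})\,\right|/\|T\| \,=\, \left|\,\varphi_{(x,F)}(T/\|T\|)\,\right|$\, and \,$T/\|T\|$\, has unit norm, this supremum coincides with \,$\sup\{\,|\varphi_{(x,F)}(S)| \,:\, \|S\| \,=\, 1\,\} \,=\, \|\varphi_{(x,F)}\|$. (An equivalent route is to appeal directly to Theorem \ref{th1.2}: for non-zero \,$x$\, with \,$x, b_{\,2}, \cdots, b_{\,n}$\, linearly independent, it supplies \,$T_{\,0} \,\in\, X_{F}^{\,\ast}$\, of unit norm with \,$T_{\,0}(x, b_{\,2}, \cdots, b_{\,n}) \,=\, \left\|\,x, b_{\,2}, \cdots, b_{\,n}\,\right\|$, giving the lower bound at once; the degenerate case \,$\left\|\,x, b_{\,2}, \cdots, b_{\,n}\,\right\| \,=\, 0$\, is trivial since \,$|T(x, b_{\,2}, \cdots, b_{\,n})| \,\leq\, \|T\|\,\|x, b_{\,2}, \cdots, b_{\,n}\| \,=\, 0$\, forces \,$\varphi_{(x,F)} \,=\, 0$.)

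Combining (i)--(iii), the image \,$\{\,\varphi_{(x,F)} \,:\, x \,\in\, X\,\}$\, is a linear subspace of \,$X_{F}^{\,\ast\,\ast}$\, and the correspondence \,$x \,\mapsto\, \varphi_{(x,F)}$\, is an isometric linear bijection onto it, which is the statement of the theorem. The main obstacle I expect is step (iii), that is, producing an honest lower bound for \,$\|\varphi_{(x,F)}\|$; without the Hahn--Banach machinery packaged in Theorems \ref{th1.2} and \ref{th1.3}, only the trivial upper estimate from (i) would be at hand, and the isometry would fail in general.
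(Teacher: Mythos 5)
Your proposal is correct and follows essentially the same route as the paper: linearity and boundedness of \,$\varphi_{(x,F)}$\, from the defining inequality, linearity of \,$x \mapsto \varphi_{(x,F)}$\, from the \,$b$-linearity of each \,$T$, and the isometry via the supremum formula of Theorem (\ref{th1.3}), which is exactly the paper's step. Your added remark on the degenerate case \,$\left\|\,x,\,b_{\,2},\,\cdots,\,b_{\,n}\,\right\| \,=\, 0$\, is in fact more careful than the paper, which establishes injectivity only under the extra hypothesis that \,$\{x,\,b_{\,2},\,\cdots,\,b_{\,n}\}$\, or \,$\{y,\,b_{\,2},\,\cdots,\,b_{\,n}\}$\, is linearly independent.
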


\begin{proof}
Let \,$\alpha,\, \beta \,\in\, \mathbb{R}$.\;Then
\[\varphi_{(\,x \,,\, F\,)}\,\left(\,\alpha\,T_{\,1} \,+\, \beta\,T_{\,2}\,\right) \,=\, \left(\,\alpha\,T_{\,1} \,+\, \beta\,T_{\,2}\,\right)\,(\,x \,,\, b_{\,2} \,,\, \cdots \,,\, b_{\,n}\,)\hspace{2.2cm}\]
\[\hspace{4.3cm}=\,\alpha\, T_{\,1}\,(\,x \,,\, b_{\,2} \,,\, \cdots \,,\, b_{\,n}\,) \,+\, \beta\, T_{\,2}\,(\,x \,,\, b_{\,2} \,,\, \cdots \,,\, b_{\,n}\,)\]
\[\hspace{4.8cm}=\,\alpha\,\varphi_{(\,x \,,\, F\,)}\,(\,T_{\,1}\,) \,+\, \beta\,\varphi_{(\,x \,,\, F\,)}\,(\,T_{\,2}\,)\; \;\forall\; T_{\,1} \,,\, T_{\,2} \,\in\, X_{F}^{\,\ast}.\]
So, \,$\varphi_{(\,x \,,\, F\,)}$\; is linear functional.\;Also, for all \,$T \,\in\, X_{F}^{\,\ast}$, we have
\[\left|\,\varphi_{(\,x \,,\, F\,)}\,(\,T\,)\,\right| \,=\, \left|\,T\,\left(\,x \,,\, b_{\,2} \,,\, \cdots \,,\, b_{\,n}\,\right)\,\right| \,\leq\, \left\|\,x \,,\, b_{\,2} \,,\, \cdots \,,\, b_{\,n}\,\right\|\, \|\,T\,\|.\] Consequently, \,$\varphi_{(\,x \,,\, F\,)} \,\in\, X_{F}^{\,\ast\,\ast}$\, with \,$\left\|\,\varphi_{(\,x \,,\, F\,)}\,\right\| \,\leq\, \left\|\,x \,,\, b_{\,2} \,,\, \cdots \,,\, b_{\,n}\,\right\|$.\;Moreover, such \,$\varphi_{(\,x \,,\, F\,)}$\; is unique.\;So, for every fixed \,$x \,\in\, X$\, there corresponds a unique bounded linear functional \,$\varphi_{(\,x \,,\, F\,)} \,\in\, X_{F}^{\,\ast\,\ast}$\; given by (\ref{eq1}).\;This defines a function \,$J \,:\, X \,\times\, \left<\,b_{\,2}\,\right> \,\times \cdots \,\times\, \left<\,b_{\,n}\,\right> \,\to\, X_{F}^{\,\ast\,\ast}$\, given by \,$J\,\left(\,x \,,\, b_{\,2} \,,\, \cdots \,,\, b_{\,n}\,\right) \,=\, \varphi_{(\,x \,,\, F\,)}$.\;We now verify that \,$J$\, is an isomorphism between \,$X \,\times\, \left<\,b_{\,2}\,\right> \,\times \cdots \,\times\, \left<\,b_{\,n}\,\right>$\, and the range of \,$J$, which is a subspace of \,$X_{F}^{\,\ast\,\ast}$.
\begin{itemize}
\item[(I)]\hspace{.1cm}
Let \,$x,\, y \,\in\, X\; \;\text{and}\; \;\alpha,\, \beta \,\in\, \mathbb{R}$.\;Then for all \,$T \,\in\, X_{F}^{\,\ast}$, we have
\[\left[\,J\,\left(\,\alpha\,x \,+\, \beta\,y \,,\, b_{\,2} \,,\, \cdots \,,\, b_{\,n}\,\right)\,\right]\,(\,T\,) \,=\, \varphi_{(\,\alpha\,x \,+\, \beta\,y  \,,\, F\,)}\,(\,T\,)\]
\[=\, T\,\left(\,\alpha\,x \,+\, \beta\,y \,,\, b_{\,2} \,,\, \cdots \,,\, b_{\,n}\,\right) \,=\, \alpha\; T\,\left(\,x \,,\, b_{\,2} \,,\, \cdots \,,\, b_{\,n}\,\right) \,+\, \beta\; T\,\left(\,y \,,\, b_{\,2} \,,\, \cdots \,,\, b_{\,n}\,\right)\]
\[=\, \alpha\; \varphi_{(\,x \,,\, F\,)}\,(\,T\,) \,+\, \beta\; \varphi_{(\,y \,,\, F\,)}\,(\,T\,) \,=\, \left(\,\alpha\; \varphi_{(\,x \,,\, F\,)} \,+\, \beta\; \varphi_{(\,y \,,\, F\,)}\,\right)\,(\,T\,) \hspace{1.8cm}\]
\[=\, \left[\,\alpha\; J\,\left(\,x \,,\, b_{\,2} \,,\, \cdots \,,\, b_{\,n}\,\right) \,+\, \beta\; J\,\left(\,y \,,\, b_{\,2} \,,\, \cdots \,,\, b_{\,n}\,\right)\,\right]\,(\,T\,)\; \;\;\forall\; T \,\in\, X_{F}^{\,\ast}.\hspace{1.4cm}\]
\[\Rightarrow\; J\,\left(\,\alpha\,x \,+\, \beta\,y \,,\, b_{\,2} \,,\, \cdots \,,\, b_{\,n}\,\right) \,=\, \alpha\; J\,\left(\,x \,,\, b_{\,2} \,,\, \cdots \,,\, b_{\,n}\,\right) \,+\, \beta\; J\,\left(\,y \,,\, b_{\,2} \,,\, \cdots \,,\, b_{\,n}\,\right).\]
This shows that \,$J$ is a \,$b$-linear operator.
\item[(II)]\hspace{.1cm} $J$\; preserves the norm:\\
For each \,$\left(\,x \,,\, b_{\,2} \,,\, \cdots \,,\, b_{\,n}\,\right) \,\in\, X \,\times\, \left<\,b_{\,2}\,\right> \,\times \cdots \,\times\, \left<\,b_{\,n}\,\right>$, we have  
\[\left\|\,J\,\left(\,x \,,\, b_{\,2} \,,\, \cdots \,,\, b_{\,n}\,\right)\,\right\| \,=\, \left\|\,\varphi_{(\,x \,,\, F\,)}\,\right\| \,=\, \sup\left\{\,\dfrac{\left|\,\varphi_{(\,x \,,\, F\,)}\,(\,T\,)\,\right|}{\|\,T\,\|} \,:\, T \,\in\, X_{F}^{\,\ast} \,,\, T \,\neq\, 0\,\right\}\]
\[\hspace{3.5cm}=\, \sup\left\{\,\dfrac{\left|\,T\,(\,x \,,\, b_{\,2} \,,\, \cdots \,,\, b_{\,n}\,)\,\right|}{\|\,T\,\|} \,:\, T \,\in\, X_{F}^{\,\ast} \,,\, T \,\neq\, 0\,\right\}\]
\begin{equation}\label{eq2}
\hspace{2cm}=\, \left\|\,x \,,\, b_{\,2} \,,\, \cdots \,,\, b_{\,n}\,\right\|\; \;[\;\text{by Theorem (\ref{th1.3})}\;].
\end{equation}
\item[(III)]\hspace{.1cm} $J$\; is injective:\\
Let \,$x,\, y \,\in\, X$\; with \,$x \,\neq\, y$\, such that the set \,$\left\{\,x,\, b_{\,2},\, \cdots,\, b_{\,n}\,\right\}$\, or \,$\left\{\,y,\, b_{\,2},\, \cdots,\, b_{\,n}\,\right\}$\, are linearly independent.\;Then by (\ref{eq2}),
\[ \left\|\,x \,-\, y \,,\, b_{\,2} \,,\, \cdots \,,\, b_{\,n}\,\right\| \,\neq\, 0 \,\Rightarrow\, \left\|\,J\,\left(\,x \,-\, y \,,\, b_{\,2} \,,\, \cdots \,,\, b_{\,n}\,\right)\,\right\| \,\neq\, 0\]
\[\Rightarrow\, \left\|\,J\,\left(\,x \,,\, b_{\,2} \,,\, \cdots \,,\, b_{\,n}\,\right) \,-\, J\,\left(\,y \,,\, b_{\,2} \,,\, \cdots \,,\, b_{\,n}\,\right)\,\right\| \,\neq\, 0\]
\[\Rightarrow\, J\,\left(\,x \,,\, b_{\,2} \,,\, \cdots \,,\, b_{\,n}\,\right) \,\neq\, J\,\left(\,y \,,\, b_{\,2} \,,\, \cdots \,,\, b_{\,n}\,\right).\hspace{1cm}\]
We thus conclude that \,$J$\; is an isomeric isomorphism of \,$X \,\times\, \left<\,b_{\,2}\,\right> \,\times \cdots \,\times\, \left<\,b_{\,n}\,\right>$\; onto the subspace of \,$X_{F}^{\,\ast\,\ast}$.\;This completes the proof.     
\end{itemize}     
\end{proof}

\begin{definition}
Let \,$X$\; be a linear n-normed space over the field \,$\mathbb{R}$.\;The isometric isomorphism \,$J \,:\, X \,\times\, \left<\,b_{\,2}\,\right> \,\times \cdots \,\times\, \left<\,b_{\,n}\,\right> \,\to\, X_{F}^{\,\ast\,\ast}$\, defined by 
\[J\,\left(\,x \,,\, b_{\,2} \,,\, \cdots \,,\, b_{\,n}\,\right) \,=\, \varphi_{(\,x \,,\, F\,)}\; \;\forall\; x \,\in\, X\; \;\&\; \;\varphi_{(\,x \,,\, F\,)} \,\in\, X_{F}^{\,\ast\,\ast}\]
is called the b-natural embedding or the b-canonical mapping of \,$X \,\times\, \left<\,b_{\,2}\,\right> \,\times \cdots \,\times\, \left<\,b_{\,n}\,\right>$\; into the second dual space \,$X_{F}^{\,\ast\,\ast}$. 
\end{definition}

\begin{definition}
A linear n-normed space \,$X$\, is said to be b-reflexive if the b-natural embedding \,$J$, maps the space \,$X \,\times\, \left<\,b_{\,2}\,\right> \,\times \cdots \,\times\, \left<\,b_{\,n}\,\right>$\, onto its second dual space \,$X_{F}^{\,\ast\,\ast}$, i.\,e., \,$J\,\left(\,X \,\times\, \left<\,b_{\,2}\,\right> \,\times \cdots \,\times\, \left<\,b_{\,n}\,\right>\,\right) \,=\, X_{F}^{\,\ast\,\ast}$.
\end{definition}

\begin{theorem}
Let \,$\left\{\,x_{k}\,\right\}_{k \,=\, 1}^{\,\infty}$\, be a sequence in a linear n-normed space \,$X$.\;Suppose 
\[\sup\limits_{1 \,\leq\, k \,<\, \infty}\,\left|\,T \left(\,x_{k} \,,\, b_{\,2} \,,\, \cdots \,,\, b_{\,n}\,\right)\,\right| \,<\, \infty\; \;\;\forall\; T \,\in\, X_{F}^{\,\ast}.\;\text{Then}\] 
\[\sup\limits_{1 \,\leq\, k \,<\, \infty}\,\left\|\,x_{\,k} \,,\, b_{\,2} \,,\, \cdots \,,\, b_{\,n}\,\right\| \,<\, \infty.\]
\end{theorem}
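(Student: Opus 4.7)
The plan is to recognize this as a Banach--Steinhaus style result, funnelled through the $b$-canonical embedding $J$ into the second dual.

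First, for each $k$ I would pass to the image $\varphi_{k} := J(x_{k}, b_{2}, \ldots, b_{n}) = \varphi_{(x_{k}, F)} \in X_{F}^{\,\ast\,\ast}$ under the $b$-natural embedding constructed in the previous theorem. By definition (\ref{eq1}), $\varphi_{k}(T) = T(x_{k}, b_{2}, \ldots, b_{n})$ for every $T \in X_{F}^{\,\ast}$, so each $\varphi_{k}$ is a bounded linear functional on the Banach space $X_{F}^{\,\ast}$. The hypothesis then reads exactly as
\[\sup_{k}\,|\varphi_{k}(T)| \,<\, \infty \quad \forall\, T \,\in\, X_{F}^{\,\ast},\]
i.e.\ the sequence $\{\varphi_{k}\}$ is pointwise bounded as a family of bounded linear operators from $X_{F}^{\,\ast}$ (Banach) into $\mathbb{R}$.

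Next I would invoke the classical uniform boundedness principle, Theorem (\ref{th0.01}), with $Y = X_{F}^{\,\ast}$, $Z = \mathbb{R}$, and $T_{k} = \varphi_{k}$. This yields $\sup_{k}\,\|\varphi_{k}\| < \infty$.

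Finally, I would convert this back using the isometry property of $J$ established in equation (\ref{eq2}) of the previous theorem, namely
\[\left\|\,\varphi_{k}\,\right\| \,=\, \left\|\,J\,(x_{k}, b_{2}, \ldots, b_{n})\,\right\| \,=\, \left\|\,x_{k}, b_{2}, \ldots, b_{n}\,\right\|,\]
which immediately gives $\sup_{k}\,\|x_{k}, b_{2}, \ldots, b_{n}\| < \infty$, as required. There is no real obstacle here: all the substantive work has already been done, namely the construction and isometry of the $b$-canonical embedding $J$ and the availability of the Banach space structure on $X_{F}^{\,\ast}$ that allows classical Banach--Steinhaus to be applied.
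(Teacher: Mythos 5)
Your proposal is correct and follows essentially the same route as the paper: both arguments push the sequence through the $b$-natural embedding to get $\varphi_{(x_k,F)} \in X_F^{\,\ast\,\ast}$, apply the uniform boundedness principle (Theorem (\ref{th0.01})) on the Banach space $X_F^{\,\ast}$, and then use the isometry (\ref{eq2}) to transfer the bound back to $\left\|\,x_k \,,\, b_2 \,,\, \cdots \,,\, b_n\,\right\|$. No differences worth noting.
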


\begin{proof}
Consider the \,$b$-natural embedding 
\[\left(\,x \,,\, b_{\,2} \,,\, \cdots \,,\, b_{\,n}\,\right) \,\to\, \varphi_{(\,x \,,\, F\,)},\; \left(\,x \,,\, b_{\,2} \,,\, \cdots \,,\, b_{\,n}\,\right) \,\in\, X \,\times\, \left<\,b_{\,2}\,\right> \,\times \cdots \,\times\, \left<\,b_{\,n}\,\right>.\]Since \,$\left\{\,x_{k}\,\right\}_{k \,=\, 1}^{\,\infty}$\; is a sequence of vectors in \,$X$, \,$\left\{\,\varphi_{(\,x_{k} \,,\, F\,)}\,\right\}_{k \,=\, 1}^{\,\infty}$\; is a sequence of bounded linear functionals in \,$X_{F}^{\,\ast\,\ast}$.\;Also,
\[\left|\,\varphi_{(\,x_{k} \,,\, F\,)}\,(\,T\,)\,\right| \,=\, \left|\,T \left(\,x_{k} \,,\, b_{\,2} \,,\, \cdots \,,\, b_{\,n}\,\right)\,\right| \,\leq\, \sup\limits_{1 \,\leq\, k \,<\, \infty}\left|\,T \left(\,x_{k} \,,\, b_{\,2} \,,\, \cdots \,,\, b_{\,n}\,\right)\,\right|.\]
Therefore, \,$\left\{\,\varphi_{(\,x_{k} \,,\, F\,)}\,(\,T\,)\,\right\}_{k \,=\, 1}^{\,\infty}$\; is bounded for each \,$T \,\in\, X_{F}^{\,\ast}$.\;Applying the Principle of Uniform Boundedness (\,Theorem (\ref{th0.01})\,), to the family \,$\left\{\,\varphi_{(\,x_{k} \,,\, F\,)}\,\right\}_{k \,=\, 1}^{\,\infty}$, we conclude that \,$\left\{\,\left\|\,\varphi_{(\,x_{k} \,,\, F\,)}\,\right\|\,\right\}_{k \,=\, 1}^{\,\infty}$\; is bounded and hence by (\ref{eq2}), \,$\left\{\,\left\|\,x_{k} \,,\, b_{\,2} \,,\, \cdots \,,\, b_{\,n}\,\right\|\,\right\}_{k \,=\, 1}^{\,\infty}$\; is bounded.\;This proves the theorem.      
\end{proof}

\begin{theorem}
A closed subspace of a b-reflexive n-Banach space is b-reflexive.
\end{theorem}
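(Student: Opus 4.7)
\emph{The plan is} to mimic the classical Banach-space argument that identifies $J_{Y}(Y)$ with $Y_{F}^{\ast\ast}$ through the restriction map $X_{F}^{\ast}\to Y_{F}^{\ast}$, adapted to the $b$-linear setting. Let $Y$ be a closed subspace of a $b$-reflexive $n$-Banach space $X$. Note first that $Y$, being closed in the complete space $X$, is itself an $n$-Banach space, so $Y_{F}^{\ast}$ and $Y_{F}^{\ast\ast}$ are defined. Let $J_{X}$ and $J_{Y}$ denote the respective $b$-natural embeddings; by assumption $J_{X}$ is surjective. Fix $\psi\in Y_{F}^{\ast\ast}$; the goal is to produce $y\in Y$ with $J_{Y}(y)=\psi$.

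\emph{The key construction} is the restriction operator $R:X_{F}^{\ast}\to Y_{F}^{\ast}$, $R(T)=T\big|_{Y\times\langle b_{2}\rangle\times\cdots\times\langle b_{n}\rangle}$. Clearly $R$ is linear and bounded with $\|R\|\le 1$; more importantly, Theorem (\ref{th1.1}) furnishes a norm-preserving extension of every $T_{Y}\in Y_{F}^{\ast}$ to some $T\in X_{F}^{\ast}$, so $R$ is surjective. Set $\tilde{\psi}:=\psi\circ R\in X_{F}^{\ast\ast}$. By the $b$-reflexivity of $X$, there exists $x_{0}\in X$ with $J_{X}(x_{0})=\tilde{\psi}$, that is
\[
T\,(x_{0},b_{2},\ldots,b_{n})=\psi\bigl(T\big|_{Y}\bigr)\qquad\forall\,T\in X_{F}^{\ast}.
\]

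\emph{The crux} is to show $x_{0}\in Y$, and for this I would split into two cases according to whether $x_{0},b_{2},\ldots,b_{n}$ are linearly independent. If they are linearly dependent, then $\|x_{0},b_{2},\ldots,b_{n}\|=0$, forcing $T(x_{0},b_{2},\ldots,b_{n})=0$ for every $T\in X_{F}^{\ast}$; hence $\tilde{\psi}=0$, and the surjectivity of $R$ yields $\psi=0=J_{Y}(\theta)$, with $\theta\in Y$. If instead $x_{0},b_{2},\ldots,b_{n}$ are linearly independent but $x_{0}\notin Y$, then Corollary (\ref{cor4}) (applicable because $Y$ is closed) produces $T\in X_{F}^{\ast}$ with $T\big|_{Y}\equiv 0$ and $T(x_{0},b_{2},\ldots,b_{n})\ne 0$; this contradicts $T(x_{0},b_{2},\ldots,b_{n})=\psi(T\big|_{Y})=\psi(0)=0$. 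So $x_{0}\in Y$ in every case.

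\emph{It remains to verify} that $J_{Y}(x_{0})=\psi$. Given any $T_{Y}\in Y_{F}^{\ast}$, extend to some $T\in X_{F}^{\ast}$ with $T\big|_{Y}=T_{Y}$ using Theorem (\ref{th1.1}); then
\[
\psi(T_{Y})=\psi\bigl(T\big|_{Y}\bigr)=\tilde{\psi}(T)=T(x_{0},b_{2},\ldots,b_{n})=T_{Y}(x_{0},b_{2},\ldots,b_{n})=\bigl[J_{Y}(x_{0})\bigr](T_{Y}),
\]
which closes the argument. \emph{The main obstacle I anticipate} is precisely the dichotomy in the $b$-linear setting: Corollary (\ref{cor4}) requires linear independence of $\{x_{0},b_{2},\ldots,b_{n}\}$, so the degenerate case where this tuple is dependent must be handled separately via the zero-norm observation, rather than by a direct separation argument. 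Everything else reduces to routine use of the Hahn-Banach-type extension theorem for bounded $b$-linear functionals and of the defining identity $J_{X}(x_{0})=\tilde{\psi}$.
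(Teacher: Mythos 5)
Your proposal is correct and follows essentially the same route as the paper: your $\tilde{\psi}=\psi\circ R$ is exactly the paper's adjoint map $T_{1}\psi$ of the restriction operator, and the use of $b$-reflexivity of $X$, Corollary (\ref{cor4}) to force the preimage into $Y$, and Theorem (\ref{th1.1}) to verify $J_{Y}(x_{0})=\psi$ all match. Your explicit treatment of the degenerate case where $x_{0},b_{2},\ldots,b_{n}$ are linearly dependent is a small but genuine improvement in care, since the paper silently assumes linear independence when invoking Corollary (\ref{cor4}).
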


\begin{proof}
Let \,$X$\; be a \,$b$-reflexive \,$n$-Banach space and \,$Y$\, be a closed subspace of \,$X$.\;Let \,$T \,:\, X_{F}^{\,\ast} \,\to\, Y_{F}^{\,\ast}$\, be an operator defined by 
\[\left(\,T\,f\,\right) \left(\,y \,,\, b_{\,2} \,,\, \cdots \,,\, b_{\,n}\,\right) \,=\, f\,\left(\,y \,,\, b_{\,2} \,,\, \cdots \,,\, b_{\,n}\,\right)\; \;\forall\; y \,\in\, Y,\; f \,\in\, X_{F}^{\,\ast},\]where \,$Y_{F}^{\,\ast}$\, denotes the Banach space of all bounded \,$b$-linear functionals defined on \,$Y \,\times\, \left<\,b_{\,2}\,\right> \,\times \cdots \,\times\, \left<\,b_{\,n}\,\right>$.\;Then for \,$f \,\in\, X_{F}^{\,\ast}$,
\[\left\|\,T\,f\,\right\| \,=\, \sup\left\{\,\dfrac{\left|\,f\,(\,y \,,\, b_{\,2} \,,\, \cdots \,,\, b_{\,n}\,)\,\right|}{\left\|\,y \,,\, b_{\,2} \,,\, \cdots \,,\, b_{\,n}\,\right\|} \,:\, \left\|\,y \,,\, b_{\,2} \,,\, \cdots \,,\, b_{\,n}\,\right\| \,\neq\, 0\,\right\} \,=\, \|\,f\,\|.\]
Let \,$J_{Y}$\, be the \,$b$-natural embedding of \,$Y \,\times\, \left<\,b_{\,2}\,\right> \,\times \cdots \,\times\, \left<\,b_{\,n}\,\right>$\, into \,$Y_{F}^{\,\ast\,\ast}$.\;That is,
\[J_{Y}\,\left(\,y \,,\, b_{\,2} \,,\, \cdots \,,\, b_{\,n}\,\right) \,=\, \psi_{(\,y \,,\, F\,)}\; \;\forall\; y \,\in\, Y, \;\psi_{(\,y \,,\, F\,)} \,\in\, Y_{F}^{\,\ast\,\ast}.\;\text{Define}\]\,$T_{\,1} \,:\, Y_{F}^{\,\ast\,\ast} \,\to\, X_{F}^{\,\ast\,\ast}$\; by \,$\left(\,T_{\,1}\,\psi_{(\,y \,,\, F\,)}\,\right)\,(\,f\,) \,=\, \psi_{(\,y \,,\, F\,)}\,(\,T\,f\,),\; f \,\in\, X_{F}^{\,\ast}$.\;We now verify that \,$T_{\,1}\,\psi_{(\,y \,,\, F\,)} \,\in\, X_{F}^{\,\ast\,\ast}$.
\begin{itemize}
\item[(I)]\hspace{.1cm} \,$T_{\,1}\,\psi_{(\,y \,,\, F\,)}$\; is linear functional:\\
Let \,$\alpha,\, \beta \,\in\, \mathbb{R}$.\;Then for every \,$f,\, g \,\in\, X^{\,\ast}_{F}$\, and \,$y \,\in\, Y$, we have
\[\left(\,T_{\,1}\,\psi_{(\,y \,,\, F\,)}\,\right)\,\left(\,\alpha\,f \,+\, \beta\,g\,\right)\,\left(\,y \,,\, b_{\,2} \,,\, \cdots \,,\, b_{\,n}\,\right)\hspace{6cm}\]
\[\,=\, \psi_{(\,y \,,\, F\,)}\,\left[\,T\,\left(\,\alpha\,f \,+\, \beta\,g\,\right)\,\right]\,\left(\,y \,,\, b_{\,2} \,,\, \cdots \,,\, b_{\,n}\,\right)\hspace{4.5cm}\]
\[\,=\, \psi_{(\,y \,,\, F\,)}\,\left[\,\alpha\; T\,\left(\,f\,\left(\,y \,,\, b_{\,2} \,,\, \cdots \,,\, b_{\,n}\,\right)\,\right) \,+\, \beta\; T\,\left(\,g\,\left(\,y \,,\, b_{\,2} \,,\, \cdots \,,\, b_{\,n}\,\right)\,\right) \,\right] \hspace{.7cm}\]
\[\,=\, \alpha\; \psi_{(\,y \,,\, F\,)}\,(\,T\,f\,)\,\left(\,y \,,\, b_{\,2} \,,\, \cdots \,,\, b_{\,n}\,\right) \,+\, \beta\; \psi_{(\,y \,,\, F\,)}\,(\,T\,g\,)\,\left(\,y \,,\, b_{\,2} \,,\, \cdots \,,\, b_{\,n}\,\right)\]
\[=\, \left[\,\alpha\; \psi_{(\,y \,,\, F\,)}\,(\,T\,f\,) \,+\,  \beta\; \psi_{(\,y \,,\, F\,)}\,(\,T\,g\,)\,\,\right]\,\left(\,y \,,\, b_{\,2} \,,\, \cdots \,,\, b_{\,n}\,\right)\hspace{2.4cm}\] 
\[=\, \left[\,\alpha\; \left(\,T_{\,1}\,\psi_{(\,y \,,\, F\,)}\,\right)\,(\,f\,) \,+\, \beta\; \left(\,T_{\,1}\,\psi_{(\,y \,,\, F\,)}\,\right)\,(\,g\,)\,\right]\,\left(\,y \,,\, b_{\,2} \,,\, \cdots \,,\, b_{\,n}\,\right).\hspace{.6cm}\]
\[\Rightarrow\, \left(\,T_{\,1}\,\psi_{(\,y \,,\, F\,)}\,\right)\,\left(\,\alpha\,f \,+\, \beta\,g\,\right) \,=\, \alpha\; \left(\,T_{\,1}\,\psi_{(\,y \,,\, F\,)}\,\right)\,(\,f\,) \,+\, \beta\; \left(\,T_{\,1}\,\psi_{(\,y \,,\, F\,)}\,\right)\,(\,g\,).\]  
\item[(II)]\hspace{.1cm} $T_{\,1}\, \psi_{(\,y \,,\, F\,)}$\; is bounded:\\
Since \,$\psi_{(\,y \,,\, F\,)}$\, preserves the norm,
\[\left\|\,\left(\,T_{\,1}\,\psi_{(\,y \,,\, F\,)}\,\right)\,(\,f\,)\,\right\| \,=\, \left\|\,\psi_{(\,y \,,\, F\,)}\,(\,T\,f\,)\,\right\| \,=\, \|\,T\,f\,\| \,=\, \|\,f\,\|.\]  
\end{itemize}
So, \,$T_{\,1}\, \psi_{(\,y \,,\, F\,)} \,\in\, X_{F}^{\,\ast\,\ast}$\, and hence \,$T_{\,1}$\, is well-defined.\;Since \,$X$\, is \,$b$-reflexive, the \,$b$-natural embedding \,$J_{X} \,:\, X \,\times\, \left<\,b_{\,2}\,\right> \,\times \cdots \,\times\, \left<\,b_{\,n}\,\right> \,\to\, X_{F}^{\,\ast\,\ast}$\, defined by 
\[J_{X}\,\left(\,x \,,\, b_{\,2} \,,\, \cdots \,,\, b_{\,n}\,\right) \,=\, \varphi_{(\,x \,,\, F\,)}\;, \;\varphi_{(\,x \,,\, F\,)} \,\in\, X_{F}^{\,\ast\,\ast}\] 
is such that \,$J_{X}\,\left(\,X \,\times\, \left<\,b_{\,2}\,\right> \,\times \cdots \,\times\, \left<\,b_{\,n}\,\right>\,\right) \,=\, X_{F}^{\,\ast\,\ast}$.\;Therefore, \,$T_{\,1} \psi_{(\,y \,,\, F\,)} \,\in\, X_{F}^{\,\ast\,\ast}$\, implies that \,$J_{X}^{\,-\, 1}\,\left(\,T_{\,1}\, \psi_{(\,y \,,\, F\,)}\,\right) \,\in\, X \,\times\, \left<\,b_{\,2}\,\right> \,\times \cdots \,\times\, \left<\,b_{\,n}\,\right>$.\;Write \,$\left(\,x \,,\, b_{\,2} \,,\, \cdots \,,\, b_{\,n}\,\right) \,=\, J_{X}^{\,-\, 1}\,\left(\,T_{\,1}\, \psi_{(\,y \,,\, F\,)}\,\right)$\, so that \,$J_{X}\,\left(\,x \,,\, b_{\,2} \,,\, \cdots \,,\, b_{\,n}\,\right) \,=\, T_{\,1}\,\psi_{(\,y \,,\, F\,)}$.\;We need to prove that \,$x \,\in\, Y$.\;Let, if possible, \,$x \,\in\, X \,-\, Y$\, such that \,$x,\, b_{\,2},\, \cdots,\, b_{\,n}$\, are linearly independent.\;Then by Corollary (\ref{cor4}), \,$\exists$\; a bounded \,$b$-linear functional \,$f \,\in\, X_{F}^{\,\ast}$\, such that \,$f\,\left(\,x \,,\, b_{\,2} \,,\, \cdots \,,\, b_{\,n}\,\right) \,\neq\, 0$\, and \,$f\,\left(\,y \,,\, b_{\,2} \,,\, \cdots \,,\, b_{\,n}\,\right) \,=\, 0$\, for all \,$y \,\in\, Y$.\;Consequently, \,$T\,f \,=\, 0$\; and as such \,$\psi_{(\,y \,,\, F\,)}\,(\,T\,f\,) \,=\, 0$.\;This leads to \,$\varphi_{(\,x \,,\, F\,)}\,(\,f\,)\\ \,=\, 0$\, and hence \,$f\,\left(\,x \,,\, b_{\,2} \,,\, \cdots \,,\, b_{\,n}\,\right) \,=\, 0$, which is a contradiction.\;Thus, we conclude that \,$\left(\,x \,,\, b_{\,2} \,,\, \cdots \,,\, b_{\,n}\,\right) \,=\, J_{X}^{\,-\, 1}\,\left(\,T_{\,1}\,\psi_{(\,y \,\, F\,)}\,\right) \,\in\, Y \,\times\, \left<\,b_{\,2}\,\right> \,\times \cdots \,\times\, \left<\,b_{\,n}\,\right>$.\;This verifies that \,$J_{X}^{\,-\, 1}\,\left(\,T_{\,1}\,\left(\, Y_{F}^{\,\ast\,\ast}\,\right) \,\right) \,\subset\, Y \,\times\, \left<\,b_{\,2}\,\right> \,\times \cdots \,\times\, \left<\,b_{\,n}\,\right>$.\;Now, let \,$\psi \,\in\, Y_{F}^{\,\ast\,\ast}$.\;Set \,$\left(\,x_{\,0} \,,\, b_{\,2} \,,\, \cdots \,,\, b_{\,n}\,\right) \,=\, J_{X}^{\,-\, 1}\,\left(\,T_{\,1}\,\psi\,\right)$\, so that \,$\left(\,x_{\,0} \,,\, b_{\,2} \,,\, \cdots \,,\, b_{\,n}\,\right) \,\in\, Y \,\times\, \left<\,b_{\,2}\,\right> \,\times \cdots \,\times\, \left<\,b_{\,n}\,\right>$.\;Let \,$g \,\in\, Y_{F}^{\,\ast}$.\;Then there exists a \,$b$-linear functional \,$f \,\in\, X_{F}^{\,\ast}$\, such that 
\[f\,\left(\,y \,,\, b_{\,2} \,,\, \cdots \,,\, b_{\,n}\,\right) \,=\, g\,\left(\,y \,,\, b_{\,2} \,,\, \cdots \,,\, b_{\,n}\,\right)\; \;\forall\; y \,\in\, Y\; \;\text{and}\; \;g \,=\, T\,f.\]
\[\text{Therefore,}\; \;\psi\,(\,g\,) \,=\, \psi\,(\,T\,f\,) \,=\, \left(\,T_{\,1}\,\psi\,\right)\,(\,f\,) \,=\, \left[\,J_{X}\,\left(\,x_{\,0} \,,\, b_{\,2} \,,\, \cdots \,,\, b_{\,n}\,\right)\,\right]\,(\,f\,)\hspace{1cm}\]
\[\hspace{2.5cm}=\, \varphi_{(\,x_{\,0} \,,\, F\,)}\,(\,f\,) \,=\, f\,\left(\,x_{\,0},\, b_{\,2},\, \cdots,\, b_{\,n}\,\right) \,=\, g\,\left(\,x_{\,0},\, b_{\,2},\, \cdots,\, b_{\,n}\,\right).\]This proves that \,$J_{Y}\,\left(\,x_{\,0} \,,\, b_{\,2} \,,\, \cdots \,,\, b_{\,n}\,\right) \,=\, \psi_{(\,x_{\,0} \,,\, F\,)}$\, and hence 
\[J_{Y}\,\left(\,Y \,\times\, \left<\,b_{\,2}\,\right> \,\times \cdots \,\times\, \left<\,b_{\,n}\,\right>\,\right) \,=\, Y_{F}^{\,\ast\,\ast}.\;\text{This proves that \,$Y$\, is $b$-reflexive.}\]   
\end{proof}

\section{$b$-weak convergence and $b$-strong convergence in linear $n$-normed space}

\smallskip\hspace{.6 cm}In this section, we shall introduce \,$b$-weak convergence and \,$b$-strong convergence relative to bounded\;$b$-linear functionals in linear\;$n$-normed space and establish that these two types of convergence are equivalent in case of finite dimensional linear\;$n$-normed space.

\begin{definition}
A sequence \,$\{\,x_{\,k}\,\}$\; in a linear n-normed space \,$X$ is said to be b-weakly convergent if \,$\exists$\, an element \,$x \,\in\, X$\; such that for every \,$T \,\in\, X^{\,\ast}_{F}$,
\[\lim\limits_{k \,\to\, \infty}\,T\,(\, x_{\,k} \,,\, b_{\,2} \,,\, \cdots \,,\, b_{\,n}\,) \,=\, T\,(\,x \,,\, b_{\,2} \,,\, \cdots \,,\, b_{\,n}\,).\]
The vector \,$x$\, is called the b-weak limit of the sequence \,$\{\,x_{\,k}\,\}$\, and we say that \,$\{\,x_{\,k}\,\}$\, converges b-weakly to \,$x$.\;Note that, for each \,$T \,\in\, X^{\,\ast}_{F},\; \left\{\,T\,(\, x_{\,k} \,,\, b_{\,2} \,,\, \cdots \,,\, b_{\,n}\,)\,\right\}$\; is a sequence of scalars in \,$\mathbb{K}$.\;Therefore, b-weak convergence means convergence of the sequence of scalars \,$\left\{\,T\,(\, x_{\,k} \,,\, b_{\,2} \,,\, \cdots \,,\, b_{\,n}\,)\,\right\}$\, for every \,$T \,\in\, X^{\,\ast}_{F}$.  
\end{definition}

\begin{theorem}\label{th4.1}
Let \,$\{\,x_{\,k}\,\}$\, be b-weakly convergent sequence in \,$X$.\;Then
\begin{itemize}
\item[(I)]\hspace{.1cm} the b-weak limit of \,$\{\,x_{\,k}\,\}$\; is unique.
\item[(II)]\hspace{.1cm}\,$\left\{\,\left\|\,x_{\,k} \,,\, b_{\,2} \,,\, \cdots \,,\, b_{\,n}\,\right\|\,\right\}$\, is bounded sequence in \,$\mathbb{K}$.
\end{itemize}
\end{theorem}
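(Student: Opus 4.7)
My plan is to handle the two parts separately, both via routine adaptations of standard normed-space arguments to the $b$-linear setting, leaning on the isometric natural embedding \,$J$\, and the uniform boundedness machinery already developed in the paper.

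For part (I), I would argue by reducing to the scalar statement of uniqueness of limits in \,$\mathbb{K}$.\;Suppose \,$\{\,x_{\,k}\,\}$\, converges $b$-weakly to two elements \,$x$\, and \,$y$.\;Then for every \,$T \,\in\, X_{F}^{\,\ast}$, the scalar sequence \,$\left\{\,T\,(\,x_{\,k} \,,\, b_{\,2} \,,\, \cdots \,,\, b_{\,n}\,)\,\right\}$\, converges in \,$\mathbb{K}$\, to both \,$T\,(\,x \,,\, b_{\,2} \,,\, \cdots \,,\, b_{\,n}\,)$\, and \,$T\,(\,y \,,\, b_{\,2} \,,\, \cdots \,,\, b_{\,n}\,)$.\;Uniqueness of limits in \,$\mathbb{K}$\, together with the $b$-linearity of \,$T$\, gives \,$T\,(\,x \,-\, y \,,\, b_{\,2} \,,\, \cdots \,,\, b_{\,n}\,) \,=\, 0$\, for every \,$T \,\in\, X_{F}^{\,\ast}$.\;Applying Corollary (\ref{cor2}) then yields \,$x \,-\, y \,=\, \theta$, i.\,e., \,$x \,=\, y$.

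For part (II), I would invoke the Principle of Uniform Boundedness essentially as in the classical setting.\;Consider the $b$-natural embedding \,$J$\, and view the associated sequence \,$\left\{\,\varphi_{(\,x_{\,k} \,,\, F\,)}\,\right\}_{k \,=\, 1}^{\,\infty}$\, of bounded linear functionals on the Banach space \,$X_{F}^{\,\ast}$.\;For each fixed \,$T \,\in\, X_{F}^{\,\ast}$, the scalar sequence
\[\left\{\,\varphi_{(\,x_{\,k} \,,\, F\,)}\,(\,T\,)\,\right\} \,=\, \left\{\,T\,(\,x_{\,k} \,,\, b_{\,2} \,,\, \cdots \,,\, b_{\,n}\,)\,\right\}\]
converges in \,$\mathbb{K}$\, by the $b$-weak convergence hypothesis and is therefore bounded.\;Applying Theorem (\ref{th0.01}) to the family \,$\left\{\,\varphi_{(\,x_{\,k} \,,\, F\,)}\,\right\}$\, on the Banach space \,$X_{F}^{\,\ast}$\, I obtain that \,$\left\{\,\left\|\,\varphi_{(\,x_{\,k} \,,\, F\,)}\,\right\|\,\right\}$\, is bounded, and the isometry relation (\ref{eq2}), namely \,$\left\|\,\varphi_{(\,x_{\,k} \,,\, F\,)}\,\right\| \,=\, \left\|\,x_{\,k} \,,\, b_{\,2} \,,\, \cdots \,,\, b_{\,n}\,\right\|$, transports this boundedness back to the original space.

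The main obstacle is really bookkeeping: keeping the fixed entries \,$b_{\,2},\, \cdots,\, b_{\,n}$\, in place throughout, and making sure that the natural embedding \,$J$\, lets us view the \,$x_{\,k}$\, as bounded linear functionals on a genuine Banach space so that Theorem (\ref{th0.01}) applies in the form stated.\;No new estimate is required; all of the analytic content sits in the isometric embedding (\ref{eq2}), in Corollary (\ref{cor2}), and in the uniform boundedness principle, all of which are already established.
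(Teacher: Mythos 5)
Your proposal is correct and follows essentially the same route as the paper's own proof: part (I) via uniqueness of scalar limits, $b$-linearity, and Corollary (\ref{cor2}); part (II) via the $b$-natural embedding, pointwise boundedness of $\left\{\varphi_{(x_k,F)}(T)\right\}$ from convergence, the Principle of Uniform Boundedness (Theorem (\ref{th0.01})) on the Banach space $X_F^{\,\ast}$, and the isometry relation (\ref{eq2}). No substantive differences.
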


\begin{proof}(I)\hspace{.1cm}
Suppose that \,$\{\,x_{\,k}\,\}$\; converges \,$b$-weakly to \,$x$\; as well as to \,$y$.\;Then 
\[T\,(\,x \,,\, b_{\,2} \,,\, \cdots \,,\, b_{\,n}\,) \,=\, \lim\limits_{k \;\to\; \infty}\,T\,(\, x_{\,k} \,,\, b_{\,2} \,,\, \cdots \,,\, b_{\,n}\,) \,=\, T\,(\,y \,,\, b_{\,2} \,,\, \cdots \,,\, b_{\,n}\,)\; \;\forall\; T \,\in\, X^{\,\ast}_{F}\]
\[\Rightarrow\, T\,(\,x \,,\, b_{\,2} \,,\, \cdots \,,\, b_{\,n}\,) \,-\, T\,(\,y \,,\, b_{\,2} \,,\, \cdots \,,\, b_{\,n}\,) \,=\, 0\; \;\;\forall\; \,T \,\in\, X^{\,\ast}_{F}\]
\[\Rightarrow\; T\,(\, x \,-\, y \,,\, b_{\,2} \,,\, \cdots \,,\, b_{\,n}\,) \,=\, 0\; \;\;\forall\; \,T \,\in\, X^{\,\ast}_{F}.\hspace{2.7cm}\]
Hence, by Corollary (\ref{cor2}), \,$x \,=\, y$.\\\\
Proof of (II)\hspace{.1cm}
Since \,$\{\,x_{\,k}\,\}$\; converges \,$b$-weakly to \,$x$, we have
\[\lim\limits_{k \;\to\; \infty}\,T\,(\,x_{\,k} \,,\, b_{\,2} \,,\, \cdots \,,\, b_{\,n}\,) \,=\, T\,(\, x \,,\, b_{\,2} \,,\, \cdots \,,\, b_{\,n}\,)\; \;\;\forall\; \,T \,\in\, X^{\,\ast}_{F}.\]
Therefore, for each \,$T \,\in\, X^{\,\ast}_{F} \;,\; \left\{\,T \left(\,x_{\,k} \,,\, b_{\,2} \,,\, \cdots \,,\, b_{\,n}\,\right)\,\right\}$\, is a convergent sequence in \,$\mathbb{K}$\, and so the sequence \,$\left\{\,T\,(\,x_{\,k} \,,\, b_{\,2} \,,\, \cdots \,,\, b_{\,n}\,)\,\right\}$\, is bounded.\;Consequently, \,$\exists$\, a constant \,$K_{T}$\, (\,depending on \,$T$\,) such that \,$\left|\,T\,(\,x_{\,k} \,,\, b_{\,2} \,,\, \cdots \,,\, b_{\,n}\,)\,\right| \,\leq\, K_{T}\, \;\forall\; k \,\in\, \mathbb{N}$.
Let \,$\left(\,x \,,\, b_{\,2} \,,\, \cdots \,,\, b_{\,n}\,\right) \,\to\, \varphi_{(\,x \,,\, F\,)}$\, be the \,$b$-natural embedding of \,$X \,\times\, \left<\,b_{\,2}\,\right> \,\times \cdots \,\times\, \left<\,b_{\,n}\,\right>$\, into \,$X^{\,\ast\,\ast}_{F}$.\;Then for each \,$k \,\in\, \mathbb{N}\,, \;\left\|\,\varphi_{(\,x_{\,k} \,,\, F\,)}\,\right\| \,=\, \left\|\,x_{\,k} \,,\, b_{\,2} \,,\, \cdots \,,\, b_{\,n}\,\right\|\; \;[\;\text{by (\ref{eq2})}\;]$, and
\[\left|\,\varphi_{(\,x_{k} \,,\, F\,)}\,(\,T\,)\,\right| \,=\, \left|\,T\,(\,x_{\,k} \,,\, b_{\,2} \,,\, \cdots \,,\, b_{\,n}\,)\,\right| \,\leq\, K_{T}\; \;\;\forall\; k \,\in\, \mathbb{N}.\]
Thus, \,$\left\{\,\varphi_{(\,x_{k} \,,\, F\,)}\,(\,T\,)\,\right\}$\; is bounded for each \,$T \,\in\, X^{\,\ast}_{F}$.\;But the space \,$X^{\,\ast}_{F}$\, being a Banach space, by the Principle of Uniform Boundedness (\,Theorem (\ref{th0.01})\,), it follows that \,$\left\{\,\left\|\,\varphi_{(\,x_{\,k} \,,\, F\,)}\,\right\|\,\right\}$\; is bounded and hence $\left\{\,\left\|\,x_{\,k} \,,\, b_{\,2} \,,\, \cdots \,,\, b_{\,n}\,\right\|\,\right\}_{k \,=\, 1}^{\,\infty}$\; is bounded.     
\end{proof}

\begin{theorem}
Let \,$\{\,x_{\,k}\,\}$\; and \,$\{\,y_{\,k}\,\}$\; be two sequences in a linear n-normed space \,$X$.\;If \,$\{\,x_{\,k}\,\}$\; and \,$\{\,y_{\,k}\,\}$\; converges b-weakly to \,$x$\, and \,$y$, respectively then for any scalar \,$\alpha\, \;\text{and}\; \,\beta$, \,$\{\,\alpha\, x_{\,k} \,+\, \beta\, y_{\,k}\,\}$\; converges b-weakly to \,$\alpha\,x \,+\, \beta\,y$.
\end{theorem}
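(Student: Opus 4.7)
The plan is to fix an arbitrary $T \in X^{\,\ast}_{F}$ and reduce the problem to the linearity of limits of scalar sequences. Since b-weak convergence is, by definition, the convergence of $\left\{\,T\,(\,x_{\,k} \,,\, b_{\,2} \,,\, \cdots \,,\, b_{\,n}\,)\,\right\}$ in $\mathbb{K}$ for every such $T$, the argument reduces to manipulating these scalar sequences using the linearity of $T$ in its first slot.

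First, I would fix an arbitrary $T \,\in\, X^{\,\ast}_{F}$ and apply properties (I) and (II) of the definition of a b-linear functional to write
\[T\,(\,\alpha\,x_{\,k} \,+\, \beta\,y_{\,k} \,,\, b_{\,2} \,,\, \cdots \,,\, b_{\,n}\,) \,=\, \alpha\, T\,(\,x_{\,k} \,,\, b_{\,2} \,,\, \cdots \,,\, b_{\,n}\,) \,+\, \beta\, T\,(\,y_{\,k} \,,\, b_{\,2} \,,\, \cdots \,,\, b_{\,n}\,).\]
Next, by the hypothesis on b-weak convergence of $\{\,x_{\,k}\,\}$ and $\{\,y_{\,k}\,\}$, the two scalar sequences on the right converge in $\mathbb{K}$ to $T\,(\,x \,,\, b_{\,2} \,,\, \cdots \,,\, b_{\,n}\,)$ and $T\,(\,y \,,\, b_{\,2} \,,\, \cdots \,,\, b_{\,n}\,)$, respectively. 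Using the standard algebra of limits in $\mathbb{K}$, I would pass to the limit to obtain
\[\lim\limits_{k \,\to\, \infty}\,T\,(\,\alpha\,x_{\,k} \,+\, \beta\,y_{\,k} \,,\, b_{\,2} \,,\, \cdots \,,\, b_{\,n}\,) \,=\, \alpha\, T\,(\,x \,,\, b_{\,2} \,,\, \cdots \,,\, b_{\,n}\,) \,+\, \beta\, T\,(\,y \,,\, b_{\,2} \,,\, \cdots \,,\, b_{\,n}\,).\]
Finally, re-applying linearity of $T$ in reverse rewrites the right-hand side as $T\,(\,\alpha\,x \,+\, \beta\,y \,,\, b_{\,2} \,,\, \cdots \,,\, b_{\,n}\,)$. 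Since $T$ was arbitrary, this establishes that $\{\,\alpha\,x_{\,k} \,+\, \beta\,y_{\,k}\,\}$ converges b-weakly to $\alpha\,x \,+\, \beta\,y$, and uniqueness of the b-weak limit (Theorem \ref{th4.1}(I)) ensures the limit is well-defined.

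Honestly, there is no substantial obstacle here; the statement is a direct consequence of the linearity of each $T \,\in\, X^{\,\ast}_{F}$ in the first variable combined with the linearity of scalar limits. The only minor care required is to notice that the operation takes place entirely in the first slot, with $b_{\,2},\, \cdots,\, b_{\,n}$ held fixed throughout, so no extension or approximation beyond the definitional properties of a b-linear functional is needed.
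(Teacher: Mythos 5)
Your proposal is correct and follows essentially the same route as the paper: fix an arbitrary $T \,\in\, X^{\,\ast}_{F}$, use linearity of $T$ in the first slot to split $T\,(\,\alpha\,x_{\,k} \,+\, \beta\,y_{\,k} \,,\, b_{\,2} \,,\, \cdots \,,\, b_{\,n}\,)$, pass to the limit via the algebra of scalar limits, and recombine. No gaps.
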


\begin{proof}
Since \,$\{\,x_{\,k}\,\}$\; and \,$\{\,y_{\,k}\,\}$\; converges \,$b$-weakly to \,$x$\; and \,$y$, 
\[\lim\limits_{k \;\to\; \infty}\,T\,(\,x_{\,k} \,,\, b_{\,2} \,,\, \cdots \,,\, b_{\,n}\,) \,=\, T\,(\, x \,,\, b_{\,2} \,,\, \cdots \,,\, b_{\,n}\,)\; \;\text{and}\] 
\[\lim\limits_{k \;\to\; \infty}\,T\,(\,y_{\,k} \,,\, b_{\,2} \,,\, \cdots \,,\, b_{\,n}\,) \,=\, T\,(\,y \,,\, b_{\,2} \,,\, \cdots \,,\, b_{\,n}\,)\; \;\;\forall\; T \,\in\, X^{\,\ast}_{F}.\]
Now, for all \,$T \,\in\, X^{\,\ast}_{F}$,  \,$\lim\limits_{k \;\to\; \infty}\,T\,(\,\alpha\,x_{\,k} \,+\, \beta\,y_{\,k} \,,\, b_{\,2} \,,\, \cdots \,,\, b_{\,n}\,)$
\[\,=\, \lim\limits_{k \;\to\; \infty}\,\left[\,T\,(\,\alpha\, x_{\,k} \,,\, b_{\,2} \,,\, \cdots \,,\, b_{\,n}\,) \,+\, T\,(\,\beta\, y_{\,k} \,,\, b_{\,2} \,,\, \cdots \,,\, b_{\,n}\,)\,\right]\hspace{3.6cm}\]
\[\,=\, \lim\limits_{k \;\to\; \infty}\,\alpha\,T\,(\,x_{\,k} \,,\, b_{\,2} \,,\, \cdots \,,\, b_{\,n}\,) \,+\, \lim\limits_{k \;\to\; \infty}\,\beta\,T\,(\,y_{\,k} \,,\, b_{\,2} \,,\, \cdots \,,\, b_{\,n}\,)\hspace{3cm}\]
\[ =\; \alpha\,T\,(\,x \,,\, b_{\,2} \,,\, \cdots \,,\, b_{\,n}\,) \,+\, \beta\,T\,(\,y \,,\, b_{\,2} \,,\, \cdots \,,\, b_{\,n}\,) \,=\, T\,(\,\alpha\,x \,+\, \beta\,y \,,\, b_{\,2} \,,\, \cdots \,,\, b_{\,n}\,).\]
This shows that \,$\{\,\alpha\,x_{\,k} \,+\, \beta\,y_{\,k}\,\}$\; converges \,$b$-weakly to \,$\alpha\,x \,+\, \beta\,y$.
\end{proof}

\begin{theorem}
A sequence \,$\{\,x_{\,k}\,\}$\; in \,$X$\, converges b-weakly to \,$x \,\in\, X$\; if and only if
\begin{itemize}
\item[(I)]\hspace{.2cm} the sequence \,$\left\{\,\left\|\,x_{\,k} \,,\, b_{\,2} \,,\, \cdots \,,\, b_{\,n}\,\right\|\,\right\}$\; is bounded and
\item[(II)]\hspace{.2cm} $\lim\limits_{k \;\to\; \infty}\,T\,(\,x_{\,k} \,,\, b_{\,2} \,,\, \cdots \,,\, b_{\,n}\,) \,=\, T\,(\,x \,,\, b_{\,2} \,,\, \cdots \,,\, b_{\,n}\,)\; \;\forall\; T \,\in\, M$, where \,$M$\, is fundamental or total subset of \,$X^{\,\ast}_{F}$.
\end{itemize} 
\end{theorem}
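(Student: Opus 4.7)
The proof splits into the trivial forward implication and a standard three-epsilon backward implication. The overall plan is to reduce the convergence test on all of $X^{\,\ast}_{F}$ to a convergence test on the dense subspace spanned by $M$, exploiting a uniform norm bound on the sequence $\{x_{\,k}\}$ to control the approximation error.

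For the ``only if'' part, assume $\{x_{\,k}\}$ converges $b$-weakly to $x$. Then (II) holds for every $T \,\in\, X^{\,\ast}_{F}$ by the very definition of $b$-weak convergence, hence a fortiori for every $T \,\in\, M$. Property (I) is immediate from Theorem (\ref{th4.1})(II). This direction requires essentially no work.

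For the ``if'' part, assume (I) and (II). Interpret ``fundamental'' or ``total'' in the usual sense: the closed linear span $\overline{\operatorname{span}(M)}$ coincides with $X^{\,\ast}_{F}$. Set $C \,=\, \sup\limits_{k}\,\left\|\,x_{\,k} \,,\, b_{\,2} \,,\, \cdots \,,\, b_{\,n}\,\right\| \,+\, \left\|\,x \,,\, b_{\,2} \,,\, \cdots \,,\, b_{\,n}\,\right\|$, which is finite by (I). Fix $T \,\in\, X^{\,\ast}_{F}$ and $\varepsilon \,>\, 0$. By density, choose $T_{\,0} \,=\, \sum_{i \,=\, 1}^{m}\,\alpha_{\,i}\,T_{\,i}$ with $T_{\,i} \,\in\, M$ and $\|\,T \,-\, T_{\,0}\,\| \,<\, \varepsilon$. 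Writing
\[
\left|\,T\,(\,x_{\,k} \,,\, b_{\,2} \,,\, \cdots \,,\, b_{\,n}\,) \,-\, T\,(\,x \,,\, b_{\,2} \,,\, \cdots \,,\, b_{\,n}\,)\,\right| \,\leq\, A_{k} \,+\, B_{k} \,+\, D,
\]
where $A_{k}$ and $D$ are the discrepancies produced by replacing $T$ with $T_{\,0}$ on $x_{\,k}$ and $x$ respectively, and $B_{k} \,=\, \left|\,T_{\,0}\,(\,x_{\,k} \,,\, b_{\,2} \,,\, \cdots \,,\, b_{\,n}\,) \,-\, T_{\,0}\,(\,x \,,\, b_{\,2} \,,\, \cdots \,,\, b_{\,n}\,)\,\right|$, the operator-norm inequality $\left|\,S\,(\,y \,,\, b_{\,2} \,,\, \cdots \,,\, b_{\,n}\,)\,\right| \,\leq\, \|\,S\,\|\,\|\,y \,,\, b_{\,2} \,,\, \cdots \,,\, b_{\,n}\,\|$ yields $A_{k} \,+\, D \,\leq\, \varepsilon \,C$.

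For the middle term $B_{k}$, linearity of $T_{\,0}$ in its first slot and hypothesis (II) applied to each $T_{\,i} \,\in\, M$ give $B_{k} \,\leq\, \sum_{i \,=\, 1}^{m}\,|\,\alpha_{\,i}\,|\,\left|\,T_{\,i}\,(\,x_{\,k} \,,\, b_{\,2} \,,\, \cdots \,,\, b_{\,n}\,) \,-\, T_{\,i}\,(\,x \,,\, b_{\,2} \,,\, \cdots \,,\, b_{\,n}\,)\,\right| \,\to\, 0$ as $k \,\to\, \infty$, so $B_{k} \,<\, \varepsilon$ for all sufficiently large $k$. Hence $\limsup_{k}\,\left|\,T\,(\,x_{\,k} \,,\, \ldots\,) \,-\, T\,(\,x \,,\, \ldots\,)\,\right| \,\leq\, (\,C \,+\, 1\,)\,\varepsilon$, and letting $\varepsilon \,\to\, 0$ completes the proof. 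The only delicate point is that this argument relies on the uniform bound (I); without it, the term $A_{k}$ could not be controlled, which explains why (I) appears as an explicit hypothesis rather than being deducible from (II) alone on a merely total set.
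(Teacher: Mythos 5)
Your proof is correct and follows essentially the same route as the paper's: the forward direction is immediate from Theorem \ref{th4.1} and the definition, and the converse is the standard three-epsilon argument using the density of $\operatorname{span}(M)$ in $X^{\,\ast}_{F}$ together with the uniform bound from (I). If anything, your version is slightly more explicit than the paper's in writing the approximating functional as a finite linear combination $\sum_i \alpha_i T_i$ of elements of $M$ and invoking linearity to transfer hypothesis (II) from $M$ to $\operatorname{span}(M)$, a step the paper glosses over.
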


\begin{proof}
In the case of \,$b$-weak convergence, (I) follows from the Theorem (\ref{th4.1}) and since \,$M \,\subset\, X^{\,\ast}_{F}$, (II) follows from the definition of \,$b$-weak convergence of \,$\{\,x_{\,k}\,\}$.\\\\
Conversely, suppose that (I) and (II) hold\,.\;By (I), \,$\exists$\; a constant \,$L$\; such that
\[\left\|\,x_{\,k} \,,\, b_{\,2} \,,\, \cdots \,,\, b_{\,n}\,\right\|\, \leq\, L\; \;\forall\; k \,\in\, \mathbb{N}\; \;\;\text{and}\; \;\left\|\,x \,,\, b_{\,2} \,,\, \cdots \,,\, b_{\,n}\,\right\| \,\leq\, L.\]
Since \,$\overline{span\,M} \,=\, X^{\,\ast}_{F}$, for each \,$T \,\in\, X^{\,\ast}_{F}$, \,$\exists$\; a sequence \,$\left\{\,T_{\,m}\,\right\}$\; in \,$span\,M$\; such that \,$\lim\limits_{m \;\to\; \infty}\, T_{\,m} \,=\, T$.\;Hence, for any given \,$\epsilon \,>\, 0,\, \;\exists\; \,T_{\,m} \,\in\, span\,M$\; such that \,$\left\|\,T_{\,m} \,-\, T \,\right\| \,<\, \dfrac{\epsilon}{3\,L}$.\;Furthermore, by the hypothesis (II), \,$\exists\; K \,\in\, \mathbb{N}$\, such that
\[\left|\,T_{\,m}\,(\,x_{\,k} \,,\, b_{\,2} \,,\, \cdots \,,\, b_{\,n}\,) \,-\, T_{\,m}\,(\,x \,,\, b_{\,2} \,,\, \cdots \,,\, b_{\,n}\,)\,\right| \,<\, \dfrac{\epsilon}{3}\; \,\;\forall\; \,m \,>\, K.\]
Now, for \,$m \,>\, K$, \,$\left|\,T\,(\,x_{\,k} \,,\, b_{\,2} \,,\, \cdots \,,\, b_{\,n}\,) \,-\, T\,(\,x \,,\, b_{\,2} \,,\, \cdots \,,\, b_{\,n}\,)\,\right|$
\[\leq\, \left|\,T\,(\,x_{\,k} \,,\, b_{\,2} \,,\, \cdots \,,\, b_{\,n}\,) \,-\, T_{\,m}\,(\,x_{\,k} \,,\, b_{\,2} \,,\, \cdots \,,\, b_{\,n}\,)\,\right| \,+\,\]
\[\hspace{1.3cm}\,+\, \left|\,T_{\,m}\,(\,x_{\,k} \,,\, b_{\,2} \,,\, \cdots \,,\, b_{\,n}\,) \,-\, T_{\,m}\,(\,x \,,\, b_{\,2} \,,\, \cdots \,,\, b_{\,n}\,)\,\right| \]
\[\hspace{1.7cm}+\; \left|\,T_{\,m}\,(\;x \;,\; b_{\,2} \,,\, \cdots \,,\, b_{\,n}\;) \;-\;T\,(\;x \;,\; b_{\,2} \,,\, \cdots \,,\, b_{\,n}\;)\,\right|  \]
\[ <\, \left\|\,T_{\,m} \,-\, T\,\right\| \left\|\,x_{\,k} \,,\, b_{\,2} \,,\, \cdots \,,\, b_{\,n}\,\right\| \,+\, \dfrac{\epsilon}{3} \,+\, \left\|\,T_{\,m} \,-\, T\,\right\| \left\|\,x \,,\, b_{\,2} \,,\, \cdots \,,\, b_{\,n}\,\right\|\]
\[<\, \dfrac{\epsilon}{3\,L}\, \cdot\,L \,+\, \dfrac{\epsilon}{3} \,+\, \dfrac{\epsilon}{3\,L}\,\cdot\,L \,=\, \dfrac{\epsilon}{3} \,+\,  \dfrac{\epsilon}{3} \,+\, \dfrac{\epsilon}{3} \,=\, \epsilon \hspace{4cm}\]
\[\Rightarrow\, \lim\limits_{k \;\to\; \infty}\,T\,(\,x_{\,k} \,,\, b_{\,2} \,,\, \cdots \,,\, b_{\,n}\,) \,=\, T\,(\,x \,,\, b_{\,2} \,,\, \cdots \,,\, b_{\,n}\,)\; \;\;\forall\; T \,\in\, X^{\,\ast}_{F}.\hspace{1cm}\]
Hence, \,$\{\,x_{\,k}\,\}$\; converges \,$b$-weakly to \,$x \,\in\, X$.    
\end{proof}

\begin{definition}
A sequence \,$\{\,x_{\,k}\,\}$\, in \,$X$ is said to be b-strongly convergent if \,$\exists$\; a vector \,$x \,\in\, X$\; such that \,$\lim\limits_{k \to \infty}\,\left\|\,x_{\,k} \,-\, x \,,\, b_{\,2} \,,\, \cdots \,,\, b_{\,n} \,\right\| \,=\, 0$.\;The vector \,$x$\; is called b-strong limit and we say that \,$\{\,x_{\,k}\,\}$\; converges b-strongly to \,$x$.
\end{definition}

\begin{theorem}
If a sequence \,$\{\,x_{\,k}\,\}$\, in \,$X$\, converges b-strongly to \,$x$, then \,$\{\,x_{\,k}\,\}$\, converges b-weakly to \,$x$\; in \,$X$.
\end{theorem}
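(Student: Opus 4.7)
The plan is to unpack both definitions and chain them together via the boundedness inequality for b-linear functionals. Assume $\{x_k\}$ converges b-strongly to $x$, so by definition $\lim_{k \to \infty} \left\|\,x_k \,-\, x \,,\, b_2 \,,\, \cdots \,,\, b_n\,\right\| \,=\, 0$. The goal is to show that for every $T \,\in\, X_F^{\,\ast}$, $\lim_{k \to \infty} T\,(\,x_k \,,\, b_2 \,,\, \cdots \,,\, b_n\,) \,=\, T\,(\,x \,,\, b_2 \,,\, \cdots \,,\, b_n\,)$.

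First, I would fix an arbitrary $T \,\in\, X_F^{\,\ast}$ and use the b-linearity of $T$ in the first slot (conditions (I) and (II) in the definition of b-linear functional) to write
\[
T\,(\,x_k \,,\, b_2 \,,\, \cdots \,,\, b_n\,) \,-\, T\,(\,x \,,\, b_2 \,,\, \cdots \,,\, b_n\,) \,=\, T\,(\,x_k \,-\, x \,,\, b_2 \,,\, \cdots \,,\, b_n\,).
\]
Then invoke the boundedness estimate $\left|\,T\,(\,y \,,\, b_2 \,,\, \cdots \,,\, b_n\,)\,\right| \,\leq\, \|\,T\,\|\,\left\|\,y \,,\, b_2 \,,\, \cdots \,,\, b_n\,\right\|$ from the definition of $\|\,T\,\|$, applied to $y \,=\, x_k \,-\, x$, to obtain
\[
\left|\,T\,(\,x_k \,,\, b_2 \,,\, \cdots \,,\, b_n\,) \,-\, T\,(\,x \,,\, b_2 \,,\, \cdots \,,\, b_n\,)\,\right| \,\leq\, \|\,T\,\|\,\left\|\,x_k \,-\, x \,,\, b_2 \,,\, \cdots \,,\, b_n\,\right\|.
\]

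Finally, letting $k \,\to\, \infty$ and using the b-strong convergence hypothesis, the right-hand side tends to $0$, whence $T\,(\,x_k \,,\, b_2 \,,\, \cdots \,,\, b_n\,) \,\to\, T\,(\,x \,,\, b_2 \,,\, \cdots \,,\, b_n\,)$. Since $T \,\in\, X_F^{\,\ast}$ was arbitrary, this is exactly the definition of b-weak convergence of $\{\,x_k\,\}$ to $x$. There is no genuine obstacle here; the statement is a direct consequence of the boundedness inequality, and the only point worth noting is that b-linearity in the first argument is needed to convert the difference of functional values into a single evaluation on $x_k \,-\, x$.
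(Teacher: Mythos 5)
Your proposal is correct and follows exactly the paper's argument: convert the difference of functional values to $T(x_k - x, b_2, \cdots, b_n)$ by b-linearity, bound it by $\|T\|\,\|x_k - x, b_2, \cdots, b_n\|$, and let $k \to \infty$. Nothing is missing.
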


\begin{proof}
Suppose \,$\{\,x_{\,k}\,\}$\, converges \,$b$-strongly to \,$x$.\;Then for every \,$T \,\in\, X^{\,\ast}_{F}$, we have
\[\left|\,T\,(\,x_{\,k} \,,\, b_{\,2} \,,\, \cdots \,,\, b_{\,n}\,) \,-\, T\,(\,x \,,\, b_{\,2} \,,\, \cdots \,,\, b_{\,n}\,)\,\right| \,=\, \left|\,T\,(\,x_{\,k} \,-\, x \,,\, b_{\,2} \,,\, \cdots \,,\, b_{\,n}\,)\,\right|\]
\[\leq\, \|\,T\,\| \left\|\,x_{\,k} \,-\, x \,,\, b_{\,2} \,,\, \cdots \,,\, b_{\,n}\,\right\|\hspace{.8cm}\]
\[\hspace{3.5cm} \,\to\, 0\; \;\text{as}\; \,k \,\to\, \infty\; \;[\;\text{since $\{\,x_{\,k}\,\}$\, converges \,$b$-strongly to \,$x$}\;]\] 
\[\Rightarrow\, \lim\limits_{k \;\to\; \infty}\,T\,(\, x_{\,k} \,,\, b_{\,2} \,,\, \cdots \,,\, b_{\,n}\,) \,=\, T\,(\,x \,,\, b_{\,2} \,,\, \cdots \,,\, b_{\,n}\,)\; \;\forall\; T \,\in\, X^{\,\ast}_{F}.\]
Hence, \,$\{\,x_{\,k}\,\}$\, converges \,$b$-weakly to \,$x$\, in \,$X$.
\end{proof}

\begin{theorem}
In a finite dimensional linear n-normed space, b-weak convergence implies b-strong convergence.
\end{theorem}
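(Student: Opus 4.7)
The plan is to reduce the assertion to the finite-dimensional equivalence-of-norms principle by extracting coordinates along a suitable basis and recovering each coordinate via a bounded $b$-linear functional. Suppose $\{x_{\,k}\}$ converges $b$-weakly to $x$ in a finite dimensional linear $n$-normed space $X$. If $b_{\,2},\,\ldots,\,b_{\,n}$ are linearly dependent, then $\left\|\,y\,,\,b_{\,2}\,,\,\cdots\,,\,b_{\,n}\,\right\| \,=\, 0$ for every $y \,\in\, X$, so the conclusion is trivial; henceforth assume $b_{\,2},\,\ldots,\,b_{\,n}$ are linearly independent.

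First, I would extend $\{\,b_{\,2},\,\ldots,\,b_{\,n}\,\}$ to an algebraic basis $\{\,b_{\,2},\,\ldots,\,b_{\,n},\,e_{\,1},\,\ldots,\,e_{\,m}\,\}$ of $X$, where $m \,=\, \dim X \,-\, (n \,-\, 1)$. Each $x_{\,k} \,-\, x$ then has a unique expansion
\[x_{\,k} \,-\, x \,=\, \sum_{i \,=\, 2}^{\,n}\,\gamma_{\,i}^{\,(\,k\,)}\,b_{\,i} \,+\, \sum_{j \,=\, 1}^{\,m}\,\alpha_{\,j}^{\,(\,k\,)}\,e_{\,j},\]
and because the $b_{\,i}$ terms are absorbed by the multilinear zero-property of the $n$-norm, we have $\left\|\,x_{\,k} \,-\, x\,,\,b_{\,2}\,,\,\cdots\,,\,b_{\,n}\,\right\| \,=\, \left\|\,\sum_{j}\,\alpha_{\,j}^{\,(\,k\,)}\,e_{\,j}\,,\,b_{\,2}\,,\,\cdots\,,\,b_{\,n}\,\right\|$.

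Next, for each $j \,\in\, \{\,1,\,\ldots,\,m\,\}$, I would define the linear functional $T_{\,j} \,:\, X \,\to\, \mathbb{R}$ by declaring $T_{\,j}\,(\,e_{\,i}\,) \,=\, \delta_{\,i\,j}$ and $T_{\,j}\,(\,b_{\,i}\,) \,=\, 0$ on the basis. To check that $T_{\,j} \,\in\, X_{F}^{\,\ast}$, observe that the map $(\,\alpha_{\,1},\,\ldots,\,\alpha_{\,m}\,) \,\mapsto\, \left\|\,\sum_{l}\,\alpha_{\,l}\,e_{\,l}\,,\,b_{\,2}\,,\,\cdots\,,\,b_{\,n}\,\right\|$ is a genuine norm on $\mathbb{R}^{\,m}$, since its vanishing would force $\sum_{l}\,\alpha_{\,l}\,e_{\,l},\,b_{\,2},\,\ldots,\,b_{\,n}$ to be linearly dependent, contradicting the choice of basis. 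By the standard equivalence of norms on $\mathbb{R}^{\,m}$ there exists $M_{\,j} \,>\, 0$ with $|\,\alpha_{\,j}\,| \,\leq\, M_{\,j}\,\left\|\,\sum_{l}\,\alpha_{\,l}\,e_{\,l}\,,\,b_{\,2}\,,\,\cdots\,,\,b_{\,n}\,\right\|$, which yields $\left|\,T_{\,j}\,(\,y\,,\,b_{\,2}\,,\,\cdots\,,\,b_{\,n}\,)\,\right| \,\leq\, M_{\,j}\,\left\|\,y\,,\,b_{\,2}\,,\,\cdots\,,\,b_{\,n}\,\right\|$ for every $y \,\in\, X$; this is the main technical step of the argument and is the only place where finite dimensionality is used essentially.

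Finally, the $b$-weak convergence $x_{\,k} \,\to\, x$ applied to each $T_{\,j}$ gives $T_{\,j}\,(\,x_{\,k} \,-\, x\,,\,b_{\,2}\,,\,\cdots\,,\,b_{\,n}\,) \,=\, \alpha_{\,j}^{\,(\,k\,)} \,\to\, 0$ as $k \,\to\, \infty$. Combining this with the triangle inequality for the $n$-norm,
\[\left\|\,x_{\,k} \,-\, x\,,\,b_{\,2}\,,\,\cdots\,,\,b_{\,n}\,\right\| \,\leq\, \sum_{j \,=\, 1}^{\,m}\,\left|\,\alpha_{\,j}^{\,(\,k\,)}\,\right|\,\left\|\,e_{\,j}\,,\,b_{\,2}\,,\,\cdots\,,\,b_{\,n}\,\right\| \,\to\, 0,\]
which gives $b$-strong convergence of $\{\,x_{\,k}\,\}$ to $x$ and completes the proof.
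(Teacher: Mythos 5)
Your proof is correct, and it follows the same basic strategy as the paper's: expand in a basis, recover coordinates by bounded $b$-linear functionals, and finish with the triangle inequality for the $n$-norm. But your version supplies two justifications that the paper omits and, as written, actually needs. The paper takes an arbitrary basis $\{e_{\,1},\ldots,e_{\,d}\}$ of $X$ and simply asserts the existence of functionals $T_{\,i}\in X_{F}^{\,\ast}$ with $T_{\,i}(e_{\,j},b_{\,2},\ldots,b_{\,n})=\delta_{\,ij}$; this is problematic, because any basis vector $e_{\,j}$ lying in $\mathrm{span}\{b_{\,2},\ldots,b_{\,n}\}$ satisfies $\left\|e_{\,j},b_{\,2},\ldots,b_{\,n}\right\|=0$, so no \emph{bounded} $b$-linear functional can take the value $1$ there, and even away from that span the boundedness of a coordinate functional is not automatic. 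You repair both points: by extending $\{b_{\,2},\ldots,b_{\,n}\}$ to a basis and observing that the $b_{\,i}$-components are killed by the $n$-norm, you only need coordinate functionals for the complementary vectors $e_{\,1},\ldots,e_{\,m}$, and you prove their boundedness by checking that $(\alpha_{\,1},\ldots,\alpha_{\,m})\mapsto\left\|\sum_{l}\alpha_{\,l}e_{\,l},b_{\,2},\ldots,b_{\,n}\right\|$ is a genuine norm on $\mathbb{R}^{\,m}$ and invoking equivalence of norms in finite dimensions --- which is indeed the one place finite dimensionality enters. You also dispose of the degenerate case of linearly dependent $b_{\,2},\ldots,b_{\,n}$, which the paper ignores. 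In short, your argument is a corrected and complete version of the paper's sketch rather than a different method.
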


\begin{proof}
Let \,$X$\, be a linear\;$n$-normed space with \,$\text{dim}\,X \,=\, d \,\geq\, n$.\;Then, \,$\exists$\, a basis \,$\left\{\,e_{\,1} \,,\, e_{\,2} \,,\, \cdots \,,\, e_{\,d}\,\right\}$\; for \,$X$.\;Let \,$\{\,x_{\,k}\,\}$\; be a sequence in \,$X$\; such that \,$\{\,x_{\,k}\,\}$\; converges \,$b$-weakly to \,$x$.\;Now, we can write 
\[ x_{\,k} \,=\, a_{\,k \,,\, 1}\,e_{\,1} \,+\, a_{\,k \,,\, 2}\,e_{\,2} \,+\, \,\cdots \,+\, a_{\,k \,,\, d}\,e_{\,d} \;,\; (\,k \,=\, 1 \,,\, 2 \,,\, \cdots\,)\, \;\text{and}\] 
\[x \,=\, a_{\,1}\,e_{\,1} \,+\, a_{\,2}\,e_{\,2} \,+\,  \cdots \,\cdots \,+\, a_{\,d}\,e_{\,d},\]
where \,$a_{\,k \,,\, 1},\, a_{\,k \,,\, 2},\, \,\cdots \,,\, a_{\,k \,,\, d} \,,\, a_{\,1},\, a_{\,2},\, \,\cdots \,,\,  a_{\,d} \,\in\, \mathbb{R}$.\;Consider the \,$b$-linear functionals \,$\left\{\,T_{\,1} \,,\, T_{\,2} \,,\, \cdots \,,\, T_{\,d}\,\right\}$\; in \,$X^{\,\ast}_{F}$\, such that
\[ T_{\,i}\,(\,e_{\,j} \,,\, b_{\,2} \,,\, \cdots \,,\, b_{\,n}\,) \,=\, \begin{cases}
1 & \text{if\;\;}\; i \;=\; j \\ 0 & \text{if\;\;}\; i \;\neq\; j\,,\; 1\; \leq\; i,\, j \;\leq\; d \end{cases}\]
Now, for \,$1 \,\leq\, i \,\leq\, d$, we have
\[T_{\,i}\,(\,x_{\,k} \,,\, b_{\,2} \,,\, \cdots \,,\, b_{\,n}\,) \,=\, T_{\,i}\,\left(\,\sum\limits_{j \,=\, 1}^{\,d}\;a_{\,k \,,\, j}\,e_{\,j} \;,\; b_{\,2} \,,\, \cdots \,,\, b_{\,n}\,\right)\]
\[=\, \sum\limits_{j \,=\, 1}^{\,d}\;a_{\,k \,,\, j}\,T_{\,i}\,(\,e_{\,j} \,,\, b_{\,2} \,,\, \cdots \,,\, b_{\,n}\,) \,=\, a_{\,k \,,\, i}\] and similarly, \,$T_{\,i}\,(\,x \,,\, b_{\,2} \,,\, \cdots \,,\, b_{\,n}\,) \,=\, a_{\,i},\,(\,1 \,\leq\, i \,\leq\, d\,)$.\;Since
\[\lim\limits_{k \;\to\; \infty}\,T\,(\,x_{\,k} \,,\, b_{\,2} \,,\, \cdots \,,\, b_{\,n}\,) \,=\, T\,(\,x \,,\, b_{\,2} \,,\, \cdots \,,\, b_{\,n}\,)\; \;\;\forall\; T \,\in\, X^{\,\ast}_{F},\] in particular, we have
\[\lim\limits_{k \;\to\; \infty}\,T_{\,i}\,(\,x_{\,k} \,,\, b_{\,2} \,,\, \cdots \,,\, b_{\,n}\,) \,=\, T_{\,i}\,(\,x \,,\, b_{\,2} \,,\, \cdots \,,\, b_{\,n}\,),\; (\,1 \,\leq\, i \,\leq\, d\,)\]
\begin{equation}\label{eq3}
\Rightarrow\; \lim\limits_{k \;\to\; \infty}\, a_{k \,,\, i} \,=\, a_{\,i}\,,\; (\,1 \,\leq\, i \,\leq\, d\,).
\end{equation} 
Therefore,
\[\left\|\,x_{\,k} \,-\, x \,,\, b_{\,2} \,,\, \cdots \,,\, b_{\,n}\,\right\| \,=\, \left\|\,\sum\limits_{i \,=\, 1}^{\,d}\, \left(\,a_{k \,,\, i} \,-\, a_{\,i}\,\right)\, e_{\,i} \,,\, b_{\,2} \,,\, \cdots \,,\, b_{\,n}\,\right\|\]
\[\hspace{4.2cm}\leq\; \sum\limits_{i \,=\, 1}^{\,d}\, \left|\,a_{k \,,\, i} \,-\, a_{\,i}\,\right|\, \left\|\,e_{\,i} \,,\, b_{\,2} \,,\, \cdots \,,\, b_{\,n}\,\right\|\]
\[\hspace{2.6cm} \to\; 0\; \;\text{as}\; \;k \;\to\; \infty\;  \;[\;\text{by}\; \;(\,\ref{eq3}\,)\;]\]
\[\Rightarrow\; \lim\limits_{k \;\to\; \infty}\, \left\|\,x_{\,k} \,-\, x \,,\, b_{\,2} \,,\, \cdots \,,\, b_{\,n}\,\right\| \,=\, 0 \] and hence \,$\{\,x_{\,k}\,\}$\; converges \,$b$-strongly to \,$x$\; in \,$X$.
\end{proof}


\begin{thebibliography}{00}

\bibitem{Freese} R. Freese,Y.J. Cho,
\emph{Geometry of Linear 2-normed Spaces}, Nova Science Publishers, New York (2001).

\bibitem{Gahler}
S. Gahler,
\emph{Lineare 2-normierte raume}, Math. Nachr. 28 (1964), 1-43.

\bibitem{Prasenjit} P.\,Ghosh, T.\,K Samanta,
\emph{Representation of Uniform Boundedness Principle and Hahn-Banach Theorem in linear\;$n$-normed space}, Submitted, arXiv: 2101.04555.

\bibitem{Mashadi}
H. Gunawan, Mashadi,
\emph{On $n$-normed spaces}, Int. J. Math. Math. Sci., 27 (2001), 631-639.
 
\bibitem{E}E. Kreyszig,
\emph{Introductory Functional Analysis with applications}, John Wiley \& Sons, 1978.

\bibitem{Soenjaya}A. L. Soenjaya,
\emph{The Open Mapping Theorem in $n$-Banach space}, International Journal of Pure and Applied Mathematics, Vol. 76, No. 4, 2012, 593-597.

\bibitem{White}
A. White, \emph{2-Banach spaces}, Math. Nachr., 42 (1969), 43-60.

\end{thebibliography}
\end{document}